\newcommand{\reals}{\mathbb{R}}
\newcommand{\complex}{\mathbb{C}}
\newcommand{\naturals}{\mathbb{N}}
\newcommand{\angles}[1]{\left\langle #1 \right\rangle}
\newcommand{\Ccom}[3]{\big[[#1,#2],#3\big]}
\newcommand{\paraa}[1]{\big(#1\big)}
\newcommand{\parab}[1]{\Big(#1\Big)}
\newcommand{\parac}[1]{\bigg(#1\bigg)}
\newtheorem{theorem}{Theorem}[section]
\newtheorem{lemma}[theorem]{Lemma}
\newtheorem{proposition}[theorem]{Proposition}
\theoremstyle{definition}
\newtheorem{definition}[theorem]{Definition}
\theoremstyle{remark}
\newtheorem{remark}[theorem]{Remark}
\numberwithin{equation}{section}
\newcommand{\xv}{\vec{x}}
\renewcommand{\mid}{\mathds{1}}
\newcommand{\A}{\mathcal{A}}
\newcommand{\Fh}{\mathfrak{F}_\hbar}
\newcommand{\Ah}{\A_\hbar}
\renewcommand{\dh}{\hat{\partial}}
\newcommand{\dhu}{\dh_u}
\newcommand{\dhone}{\dh_1}
\newcommand{\dhv}{\dh_v}
\newcommand{\dhtwo}{\dh_2}
\newcommand{\Xt}{\tilde{X}}
\renewcommand{\Re}{\operatorname{Re}}
\renewcommand{\Im}{\operatorname{Im}}
\renewcommand{\d}{\partial}
\newcommand{\db}{\bar{\d}}
\newcommand{\Xv}{\vec{X}}
\newcommand{\Nv}{\vec{N}}
\newcommand{\Yv}{\vec{Y}}
\newcommand{\ip}[1]{\langle#1\rangle}
\newcommand{\Ft}{\tilde{F}}
\newcommand{\Sym}{\operatorname{Sym}}
\newcommand{\ket}[1]{\big|#1\rangle}
\newcommand{\ad}{a^\dagger}
\newcommand{\intpart}[1]{\lfloor#1\rfloor}
\newcommand{\V}{\mathcal{V}}
\newcommand{\Ld}{\Lambda^\dagger}
\newcommand{\eps}{\varepsilon}
\newcommand{\D}{\mathcal{D}}
\newcommand{\E}{\mathcal{E}}
\newcommand{\F}{\mathcal{F}}
\newcommand{\G}{\mathcal{G}}
\title{Noncommutative Minimal Surfaces} 
\author{Joakim Arnlind}
\address[Joakim Arnlind]{Dept. of Math.\\
Link\"oping University\\
581 83 Link\"oping\\
Sweden}
\email{joakim.arnlind@liu.se}
\author{Jaigyoung Choe}
\address[Jaigyoung Choe]{%
  Korea Institute for Advanced Study\\
  85 Hoegiro Dongdaemun-Gu\\
  Seoul 130-722\\
  South Korea}
\email{choe@kias.re.kr}
\author{Jens Hoppe}
\address[Jens Hoppe]{
Sogang University\\
Sinsu-Dong, Mapo-Gu\\
Seoul 121-741\\
South Korea}
\subjclass[2000]{}
\keywords{}
\begin{document}

\begin{abstract}
  We define noncommutative minimal surfaces in the Weyl algebra, and
  give a method to construct them by generalizing the well-known
  Weierstrass-representation.
\end{abstract}

\maketitle

\noindent Given the growing interest in noncommutative spaces, and
zero-mean-curvature surfaces having been known for more
than 250 years, it is rather astonishing that a general theory
of noncommutative minimal surfaces seems to be lacking.
Our note is a modest attempt to fill this gap.

\section{Preliminaries}

\subsection{Poisson algebraic geometry of minimal surfaces}\label{sec:PoissonMinimal}

\noindent Not long ago, it was shown that the geometry of surfaces
(or, in general, almost K\"ahler manifolds) can be expressed via
Poisson brackets of the functions $x^1,\ldots,x^n$ which provide an
isometric embedding into a given ambient manifold
\cite{ah:kahlerpoisson,ahh:multilinear}. In noncommutative geometry,
as well as quantum mechanics, there is an intimate relationship
between an operator corresponding to the (commutative) function
$\{f,g\}$, and the commutator of the operators that correspond to $f$
and $g$. Therefore, obtaining knowledge about geometrical quantities,
as given in the Poisson algebra generated by $x^1,\ldots,x^n$,
provides information about the corresponding noncommutative
geometrical objects, and how to define them.

Assume that $\Sigma$ is a $2$-dimensional manifold, with local
coordinates $u=u^1,v=u^2$, embedded in $\reals^n$ via the embedding
coordinates $x^1(u,v),x^2(u,v),\ldots,x^n(u,v)$, inducing on
$\Sigma$ the metric
\begin{align*}
  g_{ab} = \d_a\xv\cdot\d_b\xv \equiv \sum_{i=1}^n\paraa{\d_ax^i}\paraa{\d_bx^i}
\end{align*}
where $\d_a=\frac{\d}{\d u^a}$. We adopt the convention that indices
$a,b,p,q$ take values in $\{1,2\}$, and $i,j,k,l$ run from $1$ to
$n$. For an arbitrary density $\rho$, one may introduce a Poisson
bracket on $C^\infty(\Sigma)$ via
\begin{align*}
  \{f,h\} = \frac{1}{\rho}\eps^{ab}\paraa{\d_af}\paraa{\d_b h},
\end{align*}
and we define the function $\gamma=\sqrt{g}/\rho$, where $g$ denotes
the determinant of the metric $g_{ab}$. Setting
$\theta^{ab}=\frac{1}{\rho}\eps^{ab}$ (the Poisson bivector) one
notes that
\begin{align}\label{eq:thetathetagamma}
  \theta^{ap}\theta^{bq}g_{pq} =
  \frac{1}{\rho^2}\eps^{ap}\eps^{bq}g_{pq}
  =\frac{g}{\rho^2}g^{ab} = \gamma^2g^{ab}
\end{align}
since $\eps^{ap}\eps^{bq}g_{pq}$ is the cofactor expansion of the
inverse of the metric. The fact that the geometry of the submanifold
$\Sigma$ can be expressed in terms of Poisson brackets follows from
the trivial, but crucial, observation that the projection operator
$\D:T\reals^n\to T\Sigma$ (where one regards $T\Sigma$ as a subspace
of $T\reals^n$) can be written as
\begin{align*}
  \D(X)^i = \frac{1}{\gamma^2}\sum_{j,k=1}^n\{x^i,x^k\}\{x^j,x^k\}X^j
\end{align*}
for $X\in T\reals^n$. Namely, one obtains
\begin{align*}
  \D(X)^i &= \frac{1}{\gamma^2}\sum_{j,k=1}^n
  \theta^{ab}\theta^{pq}(\d_ax^i)(\d_bx^k)(\d_px^j)(\d_qx^k)X^j\\
  &=\frac{1}{\gamma^2}\sum_{j=1}^n\theta^{ab}\theta^{pq}
  g_{bq}(\d_ax^i)(\d_px^j)X^j = 
  \sum_{j=1}^ng^{ap}(\d_ax^i)(\d_px^j)X^j,
\end{align*}
by using \eqref{eq:thetathetagamma}. From this expression one concludes
that $\D^2=\D$ and that $\D(X)=X$ and $\D(N)=0$ if $X\in T\Sigma$ and
$N\in T\Sigma^{\perp}$. 

In this paper, we shall foremost be interested
in the Laplace-Beltrami operator on $\Sigma$, defined as
\begin{align*}
  \Delta(f) = \frac{1}{\sqrt{g}}\d_a\parab{\sqrt{g}g^{ab}\d_bf}.
\end{align*}
\begin{proposition}\label{prop:LaplaceFormulas}
  For $f\in C^{\infty}(\Sigma)$ it holds that 
  \begin{align*}
    &\Delta(f) =
    \gamma^{-1}\sum_{i=1}^n\{\gamma^{-1}\{f,x^i\},x^i\}\\
    &\Delta(f) = \gamma^{-1}\{\gamma^{-1}\{f,u^a\}g_{ab},u^b\}. 
  \end{align*}
\end{proposition}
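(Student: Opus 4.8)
The plan is to prove the second identity first, since it involves only the metric $g_{ab}$, the density $\rho$, and $\gamma=\sqrt{g}/\rho$, and then to deduce the first identity from it; the only extra ingredient needed for the first identity is that $\xv$ is an isometric embedding into the \emph{flat} space $\reals^n$.

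\emph{Second identity.} I would start from the observation that $\{f,u^a\}=\theta^{ba}\d_b f$, which follows from $\d_b u^a=\delta^a_b$, and combine it with $\theta^{ab}=\rho^{-1}\eps^{ab}$ and $\gamma^{-1}=\rho/\sqrt{g}$. Substituting twice, the powers of $\rho$ cancel and the right-hand side becomes
\begin{align*}
  \gamma^{-1}\{\gamma^{-1}\{f,u^a\}g_{ab},u^b\}
  =\frac{1}{\sqrt{g}}\,\eps^{pq}\,\d_p\parab{\frac{1}{\sqrt{g}}\,\eps^{ab}g_{bq}\,\d_a f}.
\end{align*}
Expanding the outer derivative by the Leibniz rule splits this into a second-order part $g^{-1}\eps^{pq}\eps^{ab}g_{bq}\,\d_p\d_a f$ and a first-order part $g^{-1/2}\eps^{pq}\,\d_p\paraa{g^{-1/2}\eps^{ab}g_{bq}}\,\d_a f$. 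The cofactor identity $\eps^{ap}\eps^{bq}g_{pq}=g\,g^{ab}$ underlying \eqref{eq:thetathetagamma} turns the second-order part into $g^{ab}\d_a\d_b f$. For the first-order part one writes $\sqrt{g}\,g^{ap}=g^{-1/2}\eps^{ac}\eps^{pd}g_{cd}$ and checks, using only that $\eps^{ab}$ is constant and $g_{ab}$ symmetric, that $\eps^{pq}\,\d_p\paraa{g^{-1/2}\eps^{ab}g_{bq}}=\d_p\paraa{\sqrt{g}\,g^{ap}}$; contracting with $g^{-1/2}\d_a f$ produces exactly the first-order part of $g^{-1/2}\d_p(\sqrt{g}\,g^{ap}\d_a f)$. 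Adding the two parts reproduces $\Delta(f)$.

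\emph{First identity.} Since both sides carry an overall factor $\gamma^{-1}$, it suffices to show $\sum_i\{\gamma^{-1}\{f,x^i\},x^i\}=\{\gamma^{-1}\{f,u^a\}g_{ab},u^b\}$. Inserting $\{f,x^i\}=\theta^{ab}(\d_a f)(\d_b x^i)$ and expanding the left-hand side,
\begin{align*}
  \sum_{i=1}^n\{\gamma^{-1}\{f,x^i\},x^i\}
  =\theta^{pq}\sum_{i=1}^n\d_p\paraa{\gamma^{-1}\{f,x^i\}}\,(\d_q x^i),
\end{align*}
the terms in which $\d_p$ does not act on $\d_b x^i$ collect, via $\sum_i(\d_b x^i)(\d_q x^i)=g_{bq}$, into $T_1:=\theta^{pq}g_{bq}\,\d_p\paraa{\gamma^{-1}\theta^{ab}\d_a f}$, whereas the remaining term contains $\sum_i(\d_p\d_b x^i)(\d_q x^i)$; differentiating $g_{bq}=\sum_i(\d_b x^i)(\d_q x^i)$ and using that mixed partials commute identifies this with the Christoffel symbol of the first kind $\Gamma_{pb,q}:=\tfrac12\paraa{\d_p g_{bq}+\d_b g_{pq}-\d_q g_{pb}}$, so the remaining term equals $T_2:=\gamma^{-1}\theta^{pq}\theta^{ab}(\d_a f)\Gamma_{pb,q}$. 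On the other hand, $\{f,u^a\}=\theta^{ba}\d_b f$ turns the right-hand side into $\theta^{pq}\d_p\paraa{\gamma^{-1}\theta^{ab}g_{bq}\d_a f}$, which by the Leibniz rule differs from $T_1$ by precisely $\gamma^{-1}\theta^{pq}\theta^{ab}(\d_a f)(\d_p g_{bq})$. Hence it remains to verify $\theta^{pq}\theta^{ab}(\d_a f)\Gamma_{pb,q}=\theta^{pq}\theta^{ab}(\d_a f)(\d_p g_{bq})$, equivalently
\begin{align*}
  \theta^{pq}\theta^{ab}\,(\d_a f)\bracketa{\d_b g_{pq}-\d_p g_{bq}-\d_q g_{pb}}=0.
\end{align*}
The first summand vanishes because $\theta^{pq}$ is antisymmetric and $g_{pq}$ symmetric, and the remaining two cancel after relabelling $p\leftrightarrow q$ and once more using antisymmetry of $\theta$ and symmetry of $g$.

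Throughout, the work is essentially bilinear algebra with $\eps^{ab}$ and $g_{ab}$; the one genuinely geometric ingredient is the Gauss-type relation $\sum_i(\d_p\d_b x^i)(\d_q x^i)=\Gamma_{pb,q}$, which holds because the ambient metric is flat. I expect the main point requiring care to be the index bookkeeping in the final cancellation — tracking which indices are summed and invoking the (anti)symmetries in the right order.
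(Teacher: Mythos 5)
Your argument is correct, but it takes a genuinely different route from the paper's. The paper proves the \emph{first} formula directly and dismisses the second as ``analogous'': writing the right-hand side as $\gamma^{-1}\sum_i\theta^{ab}\d_a\paraa{\gamma^{-1}\theta^{pq}(\d_pf)(\d_qx^i)}\d_bx^i$, it moves the trailing factor $\d_bx^i$ \emph{inside} the outer derivative before applying Leibniz --- legitimate because the correction term $\theta^{ab}(\cdots)\d_a\d_bx^i$ vanishes by antisymmetry of $\theta^{ab}$ against the symmetry of $\d_a\d_bx^i$ --- so the sum over $i$ collapses to $g_{bq}$ \emph{before} any differentiation and no second derivatives of the $x^i$ ever appear; the rest is exactly your cofactor computation. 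You instead prove the second formula first (your computation there is essentially the tail of the paper's, and could be shortened: since $\eps^{pq}$ is constant it passes through $\d_p$, giving $\tfrac{1}{\sqrt g}\d_p\paraa{\sqrt g\,g^{ap}\d_af}$ in one line without splitting into first- and second-order parts), and then reduce the first formula to the second by expanding with Leibniz and absorbing the term $\sum_i(\d_p\d_bx^i)(\d_qx^i)$ via the Gauss relation $\sum_i(\d_p\d_bx^i)(\d_qx^i)=\Gamma_{pb,q}$ together with a $\theta^{pq}\theta^{ab}$ antisymmetry cancellation. Your route costs more index bookkeeping but makes the geometric mechanism explicit --- it shows precisely why the Christoffel (tangential second fundamental form) contributions drop out of the double bracket --- whereas the paper's single antisymmetry trick sidesteps that issue entirely. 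Note also that your Gauss relation needs nothing beyond $g_{bq}=\sum_i(\d_bx^i)(\d_qx^i)$ and commuting partials, so it holds for any immersion into Euclidean space; ``flatness'' is not an extra hypothesis. All the individual steps you describe check out.
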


\begin{proof}
  Let us prove the first formula; the second one is proven in an
  analogous way. One computes that
  \begin{align*}
    \frac{1}{\gamma}&\sum_{i=1}^n
    \theta^{ab}\d_a\paraa{\gamma^{-1}\theta^{pq}(\d_pf)(\d_qx^i)}\d_bx^i\\
    &=\frac{1}{\gamma}\sum_{i=1}^n
    \theta^{ab}\d_a\paraa{\gamma^{-1}\theta^{pq}(\d_pf)(\d_qx^i)(\d_bx^i)}\\
    &=\frac{1}{\sqrt{g}}\d_a\paraa{\gamma^{-1}\eps^{ab}\theta^{pq}g_{bq}(\d_pf)}
    =\frac{1}{\sqrt{g}}\d_a\paraa{\gamma^{-1}\rho\theta^{ab}\theta^{pq}g_{bq}(\d_pf)}\\
    &=\frac{1}{\sqrt{g}}\d_a\paraa{\gamma^{-1}\rho\gamma^2g^{ap}\d_pf}
    =\frac{1}{\sqrt{g}}\d_a\paraa{\sqrt{g}g^{ap}\d_pf}=\Delta(f),
  \end{align*}
  by using \eqref{eq:thetathetagamma}.
\end{proof}

\noindent On a surface, one may always find \emph{conformal
  coordinates}; i.e., coordinates with respect to which the metric
becomes $g_{ab}=\E(u,v)\delta_{ab}$ for some (strictly positive) function
$\E$. Furthermore, if we choose $\rho=1$ (giving $\gamma=\E$), the
second formula in Proposition \ref{prop:LaplaceFormulas} can be
written as
\begin{align*}
  \Delta(f) = \frac{1}{\E}\{\{f,u^a\}\delta_{ab},u^b\}
  =\frac{1}{\E}\{\{f,u\},u\}+\frac{1}{\E}\{\{f,v\},v\}
\end{align*}
if we assume the coordinates $u,v$ to be conformal. For convenience,
we shall also introduce
$\Delta_0(f)=\{\{f,u\},u\}+\{\{f,v\},v\}=\E\Delta(f)$.

Minimal surfaces can be characterized by the fact that their
embedding coordinates $x^1,\ldots,x^n$ are harmonic with respect to
the Laplace operator on the surface; i.e. $\Delta(x^i)=0$ for
$i=1,\ldots,n$. In local conformal coordinates, due to the above
Poisson algebraic formulas, one may formulate this as follows: A
surface $\xv:D\subset\reals^2\to \reals^n$ is minimal if
\begin{align*}
  &\Delta_0(x^i)=\{\{x^i,u\},u\}+\{\{x^i,v\},v\} = 0\text{ for }i=1,\ldots,n\\
  &\xv_u\cdot\xv_u=\xv_v\cdot\xv_v\text{ and }\xv_u\cdot\xv_v = 0
\end{align*}
(where $\xv_u$ and $\xv_v$ denote the partial derivatives of $\xv$
with respect to $u$ and $v$). Note that the above choice of Poisson
bracket implies that $\{u,v\}=1$. In Section \ref{sec:nms} we will, in
analogy with the above formulation, define noncommutative minimal
surfaces in a (noncommutative) algebra with generators $U,V$
satisfying $[U,V]\sim \mid$; the universal algebra with these
properties is commonly known as the \emph{Weyl algebra}.

\subsection{The Weyl algebra and its field of fractions}\label{sec:weylAlgebra}

\noindent As mentioned in the previous section, the Weyl algebra
provides us with a natural setting in which noncommutative minimal
surfaces may be defined. In this section we recall some basic properties of
the Weyl algebra (and its field of fractions), as well as introducing
the notation which shall later be used.

\begin{definition}[Weyl algebra]
  Let $\complex\angles{U,V}$ denote the free
  (associative) unital algebra generated by $U,V$. Furthermore, for
  $\hbar>0$, let $I_\hbar$ denote the two-sided ideal generated
  by the relation
  \begin{align*}
    &UV-VU=i\hbar\mid.
  \end{align*}
  The \emph{Weyl algebra} is defined as
  $\Ah=\complex\angles{U,V}/I_\hbar$.
\end{definition}

\noindent The Weyl algebra can be embedded in a skew field by a
general procedure \cite{o:lineareqnc}. Let us briefly review the
construction for the purposes of this paper.

Consider the Cartesian product $\Ah\times\Ah^{\times}$, i.e. ordered
pairs $(A,B)$ of elements in $A,B\in\Ah$ with $B\neq 0$, which in the
end will correspond to the expression $AB^{-1}$. The Weyl algebra
satisfies the Ore condition; i.e., for each pair of elements $A,B$
there exist $\beta_1,\beta_2\in\Ah$ such that
\begin{align*}
  A\beta_1=B\beta_2
\end{align*}
(see \cite{l:classalgebras,d:weylalgebras} for a proof of this fact and many other
properties of the Weyl algebra). This property allows one to define a
relation on $\Ah\times\Ah^{\times}$. Namely, $(A,B)\sim (C,D)$ if there exist
$\beta_1,\beta_2\in\Ah$ such that
\begin{align*}
  &A\beta_1 = C\beta_2\\
  &B\beta_1 = D\beta_2,
\end{align*}
and it is straightforward to check that $\sim$ is an equivalence
relation. The quotient $(\Ah\times\Ah^{\times})/\sim$ is denoted by $\Fh$.
Addition in $\Fh$ is defined as follows: Let $\beta_1,\beta_2\in\Ah$
be such that $B\beta_1=D\beta_2$. Then one sets
\begin{align*}
  (A,B)+(C,D) = (A\beta_1+C\beta_2,B\beta_1).
\end{align*}
Likewise, when $\alpha_1,\alpha_2\in\Ah$ are such that
$B\alpha_1=C\alpha_2$, one defines
\begin{align*}
  (A,B)(C,D) = (A\alpha_1,D\alpha_2).
\end{align*}
It is straightforward (although tedious) to check that these are
well-defined operations in $\Fh$ (i.e. they respect equivalence
classes) and that they do not depend on the particular choice of
$\beta_1,\beta_2,\alpha_1,\alpha_2$. Furthermore, both operations are
associative, and they satisfy the distributive law. The unit element
can be represented by $(\mid,\mid)$ and the zero element by
$(0,\mid)$. For every element $A\in\Ah$ we identify $A$ with
$(A,\mid)$ and $A^{-1}$ with $(\mid,A)$ (for $A\neq 0$), and with this
notation it follows that $AB^{-1}=(A,\mid)(\mid,B)=(A,B)$. One easily
checks that $(B,A)$ is the (right and left) inverse of $(A,B)$ and
that $(AB)^{-1}=B^{-1}A^{-1}$. Moreover, if $[A,B]=0$ it holds that
$AB^{-1}=B^{-1}A$, i.e. $(A,B)=(\mid,B)(A,\mid)$.

The Weyl algebra becomes a $\ast$-algebra upon setting $U^\ast=U$ and
$V^\ast=V$, and as a consequence of the universal property of the
fraction ring, the $\ast$-operation can be extended to $\Fh$. Thus,
$\Fh$ is a $\ast$-algebra, and it follows that
$(A,\mid)^\ast=(A^\ast,\mid)$ and $(\mid,A)^\ast=(\mid,A^\ast)$ (where
the last equality can be written as $(A^{-1})^\ast=(A^\ast)^{-1}$) for
all $A\in\Ah$. Hence, it holds that
\begin{align*}
  (AB^{-1})^\ast=(A,B)^\ast = \paraa{(A,\mid)(\mid,B)}^\ast 
  = (\mid,B^\ast)(A^\ast,\mid) = (B^\ast)^{-1}A^\ast.
\end{align*}
In the following, we shall drop the (cumbersome) notation $(A,B)$ and
simply write $AB^{-1}$; moreover, we do not distinguish between an
element $A\in\Ah$ and its corresponding image in $\Fh$. An element
$A\in\Fh$ is called \emph{hermitian} if $A^\ast=A$. The real and
imaginary parts of an element are defined as
\begin{align*}
  &\Re(A) = \frac{1}{2}\paraa{A+A^\ast}\\
  &\Im(A) = \frac{1}{2i}\paraa{A-A^\ast},
\end{align*}
and it is convenient to introduce the notation $U^1=U$ and $U^2=V$,
as well as the derivations
\begin{align*}
  &\dhu(A) \equiv \dhone(A) = \frac{1}{i\hbar}[A,V]\\
  &\dhv(A) \equiv \dhtwo(A) = -\frac{1}{i\hbar}[A,U].
\end{align*}

\begin{proposition}\label{prop:dudvProperties}
  For $A\in\Fh$ and $p(x)\in\complex[x]$ it holds that
  \begin{enumerate}
  \item $\dh_a A^{-1}=-A^{-1}\dh_a(A)A^{-1}$,
  \item $\dh_a\paraa{\dh_b(A)}=\dh_b\paraa{\dh_a(A)}$,
  \item $\dh_a\,p(U^a) = p'(U^a)$\quad\text{(no sum over $a$)},
  \end{enumerate}
  for $a,b=1,2$, where $p'(x)$ denotes the derivative (w.r.t. $x$) of
  $p(x)$.
\end{proposition}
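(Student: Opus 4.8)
The plan is to observe first that both $\dhone$ and $\dhtwo$ are, up to the scalar $\frac{1}{i\hbar}$, inner derivations of the associative algebra $\Fh$, namely $\dhone(A)=\frac{1}{i\hbar}\com{A,V}$ and $\dhtwo(A)=-\frac{1}{i\hbar}\com{A,U}$, and the commutator with a fixed element is a derivation of any associative algebra. Hence each $\dh_a$ satisfies the Leibniz rule $\dh_a(AB)=\dh_a(A)B+A\dh_a(B)$ on all of $\Fh$, and $\dh_a(\mid)=0$ since $\mid$ is central. All three statements then follow from these facts together with the defining relation $\com{U,V}=i\hbar\mid$.

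For (1), I would apply $\dh_a$ to the identity $AA^{-1}=\mid$ and use the Leibniz rule and $\dh_a(\mid)=0$ to obtain $\dh_a(A)A^{-1}+A\dh_a(A^{-1})=0$; multiplying on the left by $A^{-1}$ then gives $\dh_a(A^{-1})=-A^{-1}\dh_a(A)A^{-1}$. The only ingredient beyond algebra is that $A^{-1}$ exists in $\Fh$ for $A\neq 0$, which is guaranteed by the construction in Section \ref{sec:weylAlgebra}.

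For (3), one computes directly $\dhone(U)=\frac{1}{i\hbar}\com{U,V}=\mid$ and, symmetrically, $\dhtwo(V)=-\frac{1}{i\hbar}\com{V,U}=\mid$. Iterating the Leibniz rule (and using that $U^a$ commutes with its own powers) yields $\dh_a\paraa{(U^a)^m}=m\,(U^a)^{m-1}$ with no sum over $a$, and extending by linearity in the coefficients of $p$ gives $\dh_a\,p(U^a)=p'(U^a)$ for every $p\in\complex[x]$.

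For (2), I would expand $\dhone\paraa{\dhtwo(A)}$ and $\dhtwo\paraa{\dhone(A)}$ in terms of iterated commutators; their difference is a scalar multiple of $\ccom{A}{U}{V}-\ccom{A}{V}{U}$, which by the Jacobi identity equals $-\Ccom{U}{V}{A}=-i\hbar\com{\mid,A}=0$, so the two mixed second derivatives agree on all of $\Fh$. None of these steps is genuinely hard; the only point that deserves a word of care is that each $\dh_a$ must be a derivation of the \emph{field of fractions} $\Fh$, not merely of $\Ah$, but this is automatic because $\dh_a$ is realized as a commutator with an element ($U$ or $V$) that already lies in $\Fh$, so no appeal to extension-of-derivations results is needed.
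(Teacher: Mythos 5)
Your proof is correct and follows essentially the same route as the paper: part (1) from applying the derivation to $AA^{-1}=\mid$, part (3) from the Leibniz rule together with $[U,V]=i\hbar\mid$ (the paper computes $[a_kU^k,V]$ directly, which is the same calculation), and part (2) from the Jacobi identity plus the centrality of $[U,V]$. Your closing remark that the $\dh_a$ are automatically derivations of $\Fh$, being inner, is a sensible point of care that the paper leaves implicit.
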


\begin{proof}
  The first property is an immediate consequence of the fact that
  $\dh_a(AA^{-1})=\dh_a(\mid)=0$. For the third property, one computes
  \begin{align*}
    \dh_up(U) = \frac{1}{i\hbar}\sum_{k=0}^n[a_kU^{k},V]
    =\sum_{k=1}^nka_kU^{k-1} = p'(U),
  \end{align*}
  and similarly for $p(V)$.  Finally, to show that the derivatives
  commute, one simply calculates
  \begin{align*}
    \dhu\paraa{\dhv(A)} = \frac{1}{\hbar^2}\Ccom{A}{U}{V}
    =-\frac{1}{\hbar^2}\Ccom{U}{V}{A}-\frac{1}{\hbar^2}\Ccom{V}{A}{U}.
  \end{align*}
  Since $[U,V]=i\hbar\mid$ (and, hence, is in the center of the
  algebra) it follows that
  \begin{align*}
    \dhu\paraa{\dhv(A)} = \frac{1}{\hbar^2}\Ccom{A}{V}{U}
    =\dhv\paraa{\dhu(A)},
  \end{align*}
  which proves the statement.
\end{proof}

\noindent Furthermore, let us introduce $\Lambda=U+iV$ together with
the operators
\begin{align*}
  &\d(A) = \frac{1}{2}\paraa{\dhu(A)-i\dhv(A)}
  =\frac{1}{2\hbar}[A,\Lambda^\ast]\\
  &\db(A) = \frac{1}{2}\paraa{\dhu(A)+i\dhv(A)}
  =-\frac{1}{2\hbar}[A,\Lambda],
\end{align*}
and it follows from Proposition \ref{prop:dudvProperties} that
$\d\db A=\db\d A$. It is useful to note that $[\Lambda,\Lambda^\ast]=2\hbar\mid$.

\begin{definition}
  An element $A\in\Fh$ is called
  \emph{r-holomorphic}\footnote{``rational''-holomorphic} if $\db
  A=0$. An r-holomorphic element $A$ is called \emph{holomorphic} if
  $A\in\Ah$.
\end{definition}

\noindent By $\complex[\Lambda]$ we denote the subalgebra of $\Fh$
generated by $\Lambda$ and $\mid$. It turns out that
r-holomorphic elements can be characterized as elements of
$\complex[\Lambda]$ and their quotients. 

\begin{lemma}\label{lemma:dbHolomorphic}
  An element $A\in\Fh$ is holomorphic if and only if
  $A\in\complex[\Lambda]$.
\end{lemma}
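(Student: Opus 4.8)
The plan is to reduce the lemma to a short computation in the standard monomial basis of $\Ah$. One direction is immediate: if $A\in\complex[\Lambda]$ then $A$ is a polynomial in $\Lambda$, so $[A,\Lambda]=0$ and therefore $\db A=-\tfrac{1}{2\hbar}[A,\Lambda]=0$; since $\Lambda=U+iV\in\Ah$ we have $\complex[\Lambda]\subseteq\Ah$, so $A$ is in fact holomorphic. It remains to show the converse, namely that an arbitrary holomorphic element — i.e.\ an $A\in\Ah$ with $[A,\Lambda]=0$ — lies in $\complex[\Lambda]$.

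First I would change generators from $U,V$ to $\Lambda=U+iV$ and $\Ld=U-iV$. Since $U=\tfrac12(\Lambda+\Ld)$ and $V=\tfrac{1}{2i}(\Lambda-\Ld)$, these also generate $\Ah$, and (as already recorded just before the definition of r-holomorphic) $[\Lambda,\Ld]=2\hbar\mid$ with $2\hbar\neq 0$. Hence $\Ah$ is presented by the two generators $\Lambda,\Ld$ subject to the single relation $[\Lambda,\Ld]=2\hbar\mid$, a rescaled copy of the defining Weyl relation. Consequently the normal-ordered monomials $\{\Lambda^m(\Ld)^n:m,n\ge 0\}$ form a $\complex$-vector-space basis of $\Ah$: they span because the relation lets one move each $\Ld$ past each $\Lambda$ standing to its left, at the cost of terms of lower total degree, and their linear independence is the Poincar\'e--Birkhoff--Witt property of the Weyl algebra (equivalently, faithfulness of the Fock--Bargmann representation, in which $\Lambda$ acts as a creation and $\Ld$ as an annihilation operator).

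Now write a holomorphic $A$ in this basis as $A=\sum_{m,n}c_{mn}\Lambda^m(\Ld)^n$, a finite sum. The map $\db$ is a derivation (being a commutator with a fixed element), and $\db\Lambda=-\tfrac{1}{2\hbar}[\Lambda,\Lambda]=0$ while $\db\Ld=-\tfrac{1}{2\hbar}[\Ld,\Lambda]=\mid$, so the Leibniz rule gives $\db\paraa{\Lambda^m(\Ld)^n}=n\,\Lambda^m(\Ld)^{n-1}$ and hence $\db A=\sum_{m,n}n\,c_{mn}\,\Lambda^m(\Ld)^{n-1}$. Since the monomials $\Lambda^m(\Ld)^{n-1}$ occurring here (for $n\ge 1$) are again basis elements, the equation $\db A=0$ forces $n\,c_{mn}=0$ for all $m,n$, i.e.\ $c_{mn}=0$ whenever $n\ge 1$. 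Therefore $A=\sum_m c_{m0}\Lambda^m\in\complex[\Lambda]$, which completes the proof.

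The only substantive ingredient is the \emph{uniqueness} of the normal-ordered expansion, i.e.\ the linear independence of $\{\Lambda^m(\Ld)^n\}$ in $\Ah$; this is the single place where a genuine structural fact about the Weyl algebra must be invoked rather than a formal manipulation, and it is where I would be most careful (for instance by exhibiting the explicit action on a Fock space). Everything else — the change of generators, the derivation identities for $\db$, and the extraction of coefficients — is routine. It is also worth noting explicitly that the field of fractions $\Fh$ plays no role in this lemma, since ``holomorphic'' already forces $A\in\Ah$ by definition.
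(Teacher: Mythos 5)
Your proof is correct and follows essentially the same route as the paper: both directions are handled identically, and the converse is proved by expanding $A$ in the normal-ordered basis $\Lambda^k(\Lambda^\ast)^l$, applying $\db$, and reading off that all coefficients with $l\geq 1$ vanish. The only difference is that you spell out the linear-independence (PBW) justification for the normal form, which the paper takes for granted.
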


\begin{proof}
  Clearly, if $A\in\complex[\Lambda]$ then $A\in\Ah$ and $\db
  A=-\frac{1}{2\hbar}[A,\Lambda]=0$. Now, assume that $\db A=0$ and
  that $A\in\Ah$. Every element $A\in\Ah$ can be
  written in the following normal form
  \begin{align*}
    A = \sum_{k,l\geq 0}a_{kl}\Lambda^k(\Lambda^\ast)^l,
  \end{align*}
  and one computes
  \begin{align*}
    \db A = \sum_{k\geq 0,l\geq 1}la_{kl}\Lambda^k(\Lambda^\ast)^{l-1}.
  \end{align*}
  The fact that $\db A=0$ implies that $a_{kl}=0$ for $l\geq 1$, which
  implies that $A$ is a polynomial in $\Lambda$. Hence, $A\in\complex[\Lambda]$.
\end{proof} 

\begin{proposition}\label{prop:dbPseudoHolomorphic}
  An element $A\in\Fh$ is r-holomorphic if and only if there
  exist $B,C\in\complex[\Lambda]$ such that $A=BC^{-1}$.
\end{proposition}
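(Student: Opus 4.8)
The plan is to prove both directions. The ``if'' direction is the easy one: given $A = BC^{-1}$ with $B,C \in \complex[\Lambda]$, we compute $\db A$ using the product and inverse rules from Proposition \ref{prop:dudvProperties}. Since $B,C \in \complex[\Lambda]$ are holomorphic by Lemma \ref{lemma:dbHolomorphic}, we have $\db B = \db C = 0$, and $\db(C^{-1}) = -C^{-1}(\db C)C^{-1} = 0$. Then $\db(BC^{-1}) = (\db B)C^{-1} + B\,\db(C^{-1}) = 0$, using that $\db$ is a derivation (which follows from it being a linear combination of the derivations $\dhu,\dhv$). So $A$ is r-holomorphic.

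For the ``only if'' direction, suppose $A \in \Fh$ satisfies $\db A = 0$. By construction of $\Fh$, we may write $A = PQ^{-1}$ for some $P, Q \in \Ah$ with $Q \neq 0$. The idea is to clear the non-holomorphic part of $Q$ by multiplying on the right by a suitable element. First I would put $Q$ in the normal form $Q = \sum_{k,l \geq 0} q_{kl} \Lambda^k (\Lambda^\ast)^l$ as in the proof of Lemma \ref{lemma:dbHolomorphic}. The key step is to find $R \in \Ah$ such that $QR \in \complex[\Lambda]$; equivalently, one wants to ``invert the $(\Lambda^\ast)$-dependence'' of $Q$ on the right. Concretely, since $\Ah$ satisfies the Ore condition, there exist $\beta_1, \beta_2 \in \Ah$ with $Q\beta_1 = \Lambda^m \beta_2$ for a suitable power $m$; more to the point, one should argue that $Q$ has a right multiple lying in $\complex[\Lambda]$. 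Granting such an $R$ with $C := QR \in \complex[\Lambda]$, we then have $A = PQ^{-1} = (PR)(QR)^{-1} = (PR) C^{-1}$, so it remains to show $PR \in \complex[\Lambda]$. For this we use $\db A = 0$: since $C \in \complex[\Lambda]$ is holomorphic and $A C = PR$, we get $\db(PR) = \db(AC) = (\db A)C + A\,\db C = 0$, so $PR \in \Ah$ is holomorphic, hence $PR \in \complex[\Lambda]$ by Lemma \ref{lemma:dbHolomorphic}. Setting $B = PR$ completes the argument.

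The main obstacle is the existence of the right multiplier $R$ with $QR \in \complex[\Lambda]$. One clean way to obtain it: the Weyl algebra $\Ah$ is a domain, and in its skew field of fractions every element is a quotient of elements of $\complex[\Lambda]$ — but that is essentially what we are proving, so we must be careful not to argue circularly. Instead I would proceed by induction on the highest power of $\Lambda^\ast$ appearing in the normal form of $Q$. If $Q \in \complex[\Lambda]$ we take $R = \mid$. Otherwise write $Q = Q_0 + Q_1 \Lambda^\ast + \cdots + Q_N (\Lambda^\ast)^N$ with $Q_j \in \complex[\Lambda]$ and $Q_N \neq 0$; one then seeks to multiply on the right by a polynomial in $\Lambda$ and $\Lambda^\ast$ that lowers $N$, exploiting the commutation relation $[\Lambda, \Lambda^\ast] = 2\hbar\mid$ to move $\Lambda^\ast$ factors past $\Lambda$ factors. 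Alternatively, and perhaps more transparently, one can observe that $\complex[\Lambda^\ast]$ (or rather $\complex[U,V]$ seen appropriately) is itself a commutative polynomial ring, invoke the Ore property there, and then propagate through. I expect the cleanest route is the induction on the $\Lambda^\ast$-degree, where each step reduces to solving a linear equation over the commutative ring $\complex[\Lambda]$.

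A remark on the subtlety: one should check that the $R$ produced is nonzero in $\Ah$ (so that $QR \neq 0$ and $(QR)^{-1}$ makes sense in $\Fh$), which follows since $\Ah$ is a domain and both $Q, R \neq 0$. With that verified, the equality $PQ^{-1} = (PR)(QR)^{-1}$ holds in $\Fh$ by the definition of multiplication and inverses recalled in Section \ref{sec:weylAlgebra}, and the proof is complete.
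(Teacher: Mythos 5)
Your ``if'' direction is fine and matches the paper. The ``only if'' direction, however, hinges on an existence claim that is false: for a general nonzero $Q\in\Ah$ there is \emph{no} nonzero $R\in\Ah$ with $QR\in\complex[\Lambda]$. Write $Q=\sum_{k=0}^{N}Q_k(\Lambda^\ast)^k$ and $R=\sum_{l=0}^{M}R_l(\Lambda^\ast)^l$ in normal form, with $Q_k,R_l\in\complex[\Lambda]$ and $Q_N,R_M\neq 0$. Since $(\Lambda^\ast)^k p(\Lambda)=p(\Lambda)(\Lambda^\ast)^k+(\text{terms of lower } \Lambda^\ast\text{-degree})$, the top term of $QR$ in normal form is $Q_NR_M(\Lambda^\ast)^{N+M}$, and $Q_NR_M\neq 0$ because $\complex[\Lambda]$ is a domain. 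Hence right multiplication can never lower the $\Lambda^\ast$-degree, and $QR\in\complex[\Lambda]$ forces $N=0$, i.e.\ $Q\in\complex[\Lambda]$ to begin with. Concretely, $Q=\Lambda^\ast$ admits no such $R$, yet $A=\paraa{\Lambda\Lambda^\ast}(\Lambda^\ast)^{-1}=\Lambda$ is a perfectly good r-holomorphic element presented with denominator $\Lambda^\ast$. The same degree count shows that your proposed induction ``lowering $N$ by right multiplication'' cannot get started. The underlying misconception is that the holomorphic representative $(B,C)$ should be a right multiple of the given pair $(P,Q)$; in $\Fh$ it need only be \emph{equivalent} to it (both pairs have a common right multiple), which is strictly weaker.

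The paper avoids this entirely with a ``differentiate numerator and denominator'' argument: from $\db A=0$ and $A=BC^{-1}$ one gets $\db B=BC^{-1}(\db C)$, hence $A=(\db B)(\db C)^{-1}$ whenever $\db C\neq 0$; iterating, and using that $\db$ is locally nilpotent on $\Ah$ (every element is a polynomial in $\Lambda$ and $\Lambda^\ast$, so some power of $\db$ kills it), one reaches a representation $A=\tilde{B}\tilde{C}^{-1}$ with $\db\tilde{B}=\db\tilde{C}=0$, whence $\tilde{B},\tilde{C}\in\complex[\Lambda]$ by Lemma \ref{lemma:dbHolomorphic}. Note that the new pair $(\db B,\db C)$ is not a right multiple of $(B,C)$ --- it is a genuinely different representative of the same fraction --- which is exactly the manoeuvre your approach is missing. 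Your final step (that the new numerator, being a $\db$-closed element of $\Ah$, lies in $\complex[\Lambda]$) is correct and would survive in a repaired argument.
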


\begin{proof}
  Clearly, if $A=BC^{-1}$ with $B,C\in\complex[\Lambda]$, then 
  \begin{align*}
    \db A = (\db B)C^{-1}-BC^{-1}\paraa{\db C}C^{-1} = 0,
  \end{align*}
  by Lemma \ref{lemma:dbHolomorphic}. Now, assume that $A=BC^{-1}$,
  with $B\neq 0$, and that $\db A=0$. From the above equation it follows
  that
  \begin{align*}
    \db B = BC^{-1}\paraa{\db C},
  \end{align*}
  and if $\db C=0$ then $\db B=0$ and Lemma \ref{lemma:dbHolomorphic}
  implies that $B,C\in\complex[\Lambda]$. If $\db C\neq 0$ then $\db
  B\neq 0$ and one obtains
  \begin{align*}
    \paraa{\db B}(\db C)^{-1} = BC^{-1} = A.
  \end{align*}
  It follows that $\db (\db B)(\db C)^{-1}=\db A = 0$, and one may
  repeat the argument with respect to the representation $A=(\db
  B)(\db C)^{-1}$. Thus, as long as $\db^nC\neq 0$ (and, hence, $\db^n
  B\neq 0$) one obtains
  \begin{align*}
    A = (\db^n B)(\db^n C)^{-1}.
  \end{align*}
  For every non-zero $B\in\Ah$ there exists an integer $n_0$ such that
  $\db^{n_0}B\neq 0$ and $\db^{n_0+1}B=0$, since $B$ can be written as
  a polynomial in $\Lambda$ and $\Lambda^\ast$. The above argument
  implies that one can always find $\tilde{B}$ ($=\db^{n_0}B$)
  and $\tilde{C}$ ($=\db^{n_0}C$) such that
  $A=\tilde{B}\tilde{C}^{-1}$, fulfilling
  $\db\tilde{B}=\db\tilde{C}=0$. From Lemma \ref{lemma:dbHolomorphic}
  it follows that $\tilde{B},\tilde{C}\in\complex[\Lambda]$.
\end{proof}

\noindent Note that r-holomorphic elements are the analogues of meromorphic
functions in complex analysis. However, since there is no immediate
concept of point in the noncommutative algebra, it holds that $\db A$
is identically $0$ for a r-holomorphic element, and not only at
points where the derivative exists. This distinction becomes
important if one represents the Weyl algebra on a vector space, as in
Section \ref{sec:catenoid}, where there are elements that are not invertible.

Let us continue by defining the Laplace operator, as well as harmonic
elements and some of their properties.

\begin{definition}
  The \emph{noncommutative Laplace operator} $\Delta_0:\Fh\to\Fh$ is defined as
  \begin{align*}
    \Delta_0(A) = \dhu^2(A)+\dhv^2(A)=
    -\frac{1}{\hbar^2}\Ccom{A}{V}{V}-\frac{1}{\hbar^2}\Ccom{A}{U}{U}.
  \end{align*}
  An element $A\in\Fh$ is called \emph{harmonic} if $\Delta_0(A)=0$.
\end{definition}

\begin{proposition}
  For $A\in\Fh$ it holds that $\Delta_0(A) = 4\d\db(A)=4\db\d(A)$.
\end{proposition}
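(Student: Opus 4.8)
The plan is to reduce the identity to the definitions of $\d$ and $\db$ as linear combinations of the commuting derivations $\dhu,\dhv$, where the only nontrivial input is part (2) of Proposition \ref{prop:dudvProperties}. Writing $\d=\frac{1}{2}\paraa{\dhu-i\dhv}$ and $\db=\frac{1}{2}\paraa{\dhu+i\dhv}$, I would expand, for arbitrary $A\in\Fh$,
\begin{align*}
  4\,\d\db(A) &= \paraa{\dhu-i\dhv}\paraa{\dhu+i\dhv}(A)\\
  &= \dhu^2(A)+i\,\dhu\dhv(A)-i\,\dhv\dhu(A)+\dhv^2(A).
\end{align*}
Since $\dhu\dhv(A)=\dhv\dhu(A)$ by Proposition \ref{prop:dudvProperties}, the two cross terms cancel, leaving $4\,\d\db(A)=\dhu^2(A)+\dhv^2(A)=\Delta_0(A)$. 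Interchanging the roles of $\d$ and $\db$ only flips the signs of the cross terms, which again cancel, so $4\,\db\d(A)=\Delta_0(A)$ as well; this is of course consistent with the already-noted identity $\d\db A=\db\d A$.

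I do not anticipate any real obstacle here: the statement is a formal consequence of the definitions together with the commutativity $\dhu\dhv=\dhv\dhu$. If a commutator-level argument were preferred instead, one could start from $\d(A)=\frac{1}{2\hbar}[A,\Lambda^\ast]$ and $\db(A)=-\frac{1}{2\hbar}[A,\Lambda]$, obtaining $4\,\d\db(A)=-\frac{1}{\hbar^2}\Ccom{A}{\Lambda}{\Lambda^\ast}$, then substitute $\Lambda=U+iV$, $\Lambda^\ast=U-iV$ and invoke the Jacobi identity: the mixed terms combine into $-i\paraa{\Ccom{A}{U}{V}-\Ccom{A}{V}{U}}=i\,\Ccom{U}{V}{A}$, which vanishes because $[U,V]=i\hbar\mid$ is central. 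This again yields $4\,\d\db(A)=-\frac{1}{\hbar^2}\paraa{\Ccom{A}{U}{U}+\Ccom{A}{V}{V}}=\Delta_0(A)$. The derivation-level computation is shorter and uses only results already established, so that is the route I would take.
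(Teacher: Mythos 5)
Your proposal is correct and follows essentially the same route as the paper: expand $4\d\db(A)$ in terms of $\dhu$ and $\dhv$ and cancel the cross terms using the commutativity $\dhu\dhv=\dhv\dhu$ from Proposition \ref{prop:dudvProperties}. The alternative commutator-level argument you sketch is also fine, but the derivation-level computation is exactly what the paper does.
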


\begin{proof}
  \noindent Let us prove that $\Delta_0(A)=4\d\paraa{\db(A)}$; the
  second equality then follows from the fact that $\d\db=\db\d$. One
  computes
  \begin{align*}
    4\d\paraa{\db(A)} &= \dhu\paraa{\dhu(A)+i\dhv(A)}
    -i\dhv\paraa{\dhu(A)+i\dhv(A)}\\
    &=\dhu^2(A)+\dhv^2(A)+i\dhu\paraa{\dhv(A)}-i\dhv\paraa{\dhu(A)}\\
    &=\dhu^2(A)+\dhv^2(A) = \Delta_0(A),
  \end{align*}
  by using Proposition \ref{prop:dudvProperties}.
\end{proof}

\begin{proposition}
  Let $A\in\Fh$ be r-holomorphic. Then $\Re A$ and $\Im A$ fulfill
  \begin{align*}
    \dhu\Re A = \dhv\Im A\quad\textrm{ and }\quad
    \dhv\Re A = -\dhu\Im A,
  \end{align*}
  and it follows that $\Re A$ and $\Im A$ are harmonic.
\end{proposition}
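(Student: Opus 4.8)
The plan is to unpack the defining condition $\db A = 0$ and to exploit that the derivations $\dhu$ and $\dhv$ are compatible with the $\ast$-operation. First I would note that, since $A$ is r-holomorphic,
\[
  0 = 2\db(A) = \dhu(A) + i\,\dhv(A),
\]
so that $\dhu(A) = -i\,\dhv(A)$. This is the single algebraic identity from which everything else is extracted, exactly as in the classical Cauchy--Riemann argument.

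Next I would establish that $\dhu$ and $\dhv$ commute with $\ast$. Because $U^\ast = U$ and $V^\ast = V$, for any $B \in \Fh$ one has $\dhu(B)^\ast = \bigl(\tfrac{1}{i\hbar}[B,V]\bigr)^\ast = \tfrac{1}{i\hbar}[B^\ast,V] = \dhu(B^\ast)$, and likewise $\dhv(B)^\ast = \dhv(B^\ast)$; in particular $\dhu$ and $\dhv$ map hermitian elements to hermitian elements. Writing $A = \Re A + i\,\Im A$ with $\Re A$ and $\Im A$ hermitian, and substituting into $\dhu(A) = -i\,\dhv(A)$, one obtains
\[
  \dhu(\Re A) + i\,\dhu(\Im A) = \dhv(\Im A) - i\,\dhv(\Re A),
\]
an identity all of whose four displayed constituents are hermitian. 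Comparing hermitian and anti-hermitian parts then yields precisely $\dhu\Re A = \dhv\Im A$ and $\dhv\Re A = -\dhu\Im A$.

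Finally, harmonicity follows by a direct computation using these two relations together with $\dhu\dhv = \dhv\dhu$ from Proposition~\ref{prop:dudvProperties}:
\[
  \Delta_0(\Re A) = \dhu^2(\Re A) + \dhv^2(\Re A) = \dhu\dhv(\Im A) - \dhv\dhu(\Im A) = 0,
\]
and similarly $\Delta_0(\Im A) = \dhu^2(\Im A) + \dhv^2(\Im A) = -\dhu\dhv(\Re A) + \dhv\dhu(\Re A) = 0$. The only step carrying any content is the compatibility of $\dhu$ and $\dhv$ with $\ast$, which is a one-line check, so I do not anticipate a genuine obstacle; the remainder is formal manipulation mirroring the commutative case.
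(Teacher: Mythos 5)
Your proof is correct and follows essentially the same route as the paper: extract the Cauchy--Riemann-type identities by splitting $\db A=0$ into hermitian and anti-hermitian parts, then conclude harmonicity from the commutativity of $\dhu$ and $\dhv$. The only difference is that you explicitly verify that $\dhu$ and $\dhv$ commute with $\ast$ (hence preserve hermiticity), a point the paper uses implicitly when it separates the two parts.
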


\begin{proof}
  Since $A$ is r-holomorphic, it holds that $\db A=0$, which is
  equivalent to
  \begin{align*}
    0=\paraa{\dhu+i\dhv}\paraa{\Re A+i\Im A}=
    \dhu\Re A-\dhv\Im A+i\paraa{\dhu\Im A+\dhv\Re A}.
  \end{align*}
  Since $\Re A$ and $\Im A$ are hermitian, it follows that
  \begin{align*}
    &\dhu\Re A - \dhv\Im A=0\\
    &\dhv\Re A + \dhu\Im A = 0,
  \end{align*}
  which proves the first statement. Moreover, it is then easy to see
  that
  \begin{align*}
    \dhu^2\Re A+\dhv^2\Re A = \dhu\dhv\Im A-\dhv\dhu\Im A = 0,
  \end{align*}
  since $\dhu$ and $\dhv$ commute, by Proposition
  \ref{prop:dudvProperties}. A similar computation is done to show
  that $\Im A$ is harmonic.
\end{proof}

\noindent Integration of r-holomorphic elements is introduced as
the inverse of the operator $\d$; namely, if $A$ and $B$ are
r-holomorphic elements, such that $\d B=A$, then we call $B$ a
\emph{primitive element of $A$}. Furthermore, we introduce the
notation
\begin{align*}
  \int Ad\Lambda
\end{align*}
to denote an arbitrary primitive element of $A$. Such r-holomorphic
elements $A$, which have at least one primitive element, are called
\emph{integrable}.  Clearly, holomorphic elements, being polynomials
in $\Lambda$, are integrable, and primitive elements may readily be found.

\section{Noncommutative minimal surfaces}\label{sec:nms}

\noindent We shall consider the free module $\Fh^n$ together with its canonical
basis
\begin{align*}
  e_k=(\underbrace{0,\ldots,0}_{k-1},\mid,0,\ldots,0)
\end{align*}
and one extends the action of $\dh_a$ as
\begin{align*}
  &\dh_a(\Xv) = \dh_a(X^i)e_i
\end{align*}
for $\Xv=X^ie_i$ and $a=1,2$.  An element $\Xv\in\Fh^n$ is called
\emph{hermitian} if $X^i$ is hermitian for $i=1,\ldots,n$, and an
element $\Xv\in\Fh^n$ is called (r-)holomorphic if $X^i$ is
(r-)holomorphic for $i=1,\ldots,n$.  Moreover, for
$\Xv,\Yv\in\Fh^n$ one introduces a symmetric bi-$\complex$-linear form
\begin{align*}
  \ip{\Xv,\Yv} =\sum_{i=1}^n\ip{X^i,Y^i}\equiv\frac{1}{2}\sum_{i=1}^n
  \paraa{X^iY^i+Y^iX^i}.
\end{align*}
The above form fulfills the following derivation property, with
respect to $\dh_1$ and $\dh_2$:

\begin{proposition}
  For $\Xv,\Yv\in\Fh^n$, with $\Xv=X^ie_i$ and $\Yv=Y^ie_i$, it holds
  that
  \begin{align*}
    [\ip{\Xv,\Yv},A] = \ip{[X^i,A]e_i,\Yv}+\ip{\Xv,[Y^i,A]e_i}
  \end{align*}
  for any $A\in\Fh$. In particular, it holds that
  \begin{align*}
    &\dh_a\ip{\Xv,\Yv} = \ip{\dh_a\Xv,\Yv}+\ip{\Xv,\dh_a\Yv},
  \end{align*}
  for $a=1,2$.
\end{proposition}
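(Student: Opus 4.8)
The plan is to reduce everything to the elementary Leibniz identity $[BC,A]=B[C,A]+[B,A]C$, which holds in any associative algebra and hence in $\Fh$. First I would expand the left-hand side directly from the definition $\ip{\Xv,\Yv}=\frac{1}{2}\sum_i\paraa{X^iY^i+Y^iX^i}$: using $\complex$-linearity of the map $B\mapsto[B,A]$ and applying the Leibniz identity to each product $X^iY^i$ and $Y^iX^i$, one obtains
\begin{align*}
  [\ip{\Xv,\Yv},A]=\frac{1}{2}\sum_{i=1}^n\paraa{X^i[Y^i,A]+[X^i,A]Y^i+Y^i[X^i,A]+[Y^i,A]X^i}.
\end{align*}

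Next I would expand the right-hand side, again straight from the definition of the form: since $[X^i,A]e_i$ denotes $\sum_i[X^i,A]e_i$, one has $\ip{[X^i,A]e_i,\Yv}=\frac{1}{2}\sum_i\paraa{[X^i,A]Y^i+Y^i[X^i,A]}$ and, symmetrically, $\ip{\Xv,[Y^i,A]e_i}=\frac{1}{2}\sum_i\paraa{X^i[Y^i,A]+[Y^i,A]X^i}$. Adding these two expressions reproduces exactly the four-term sum above, which establishes the first identity. The only point requiring care is the bookkeeping of the eight resulting terms, but there is no genuine obstacle here: the symmetrization built into the definition of $\ip{\cdot,\cdot}$ is precisely what makes the two sides agree, and no use of the commutation relation $[U,V]=i\hbar\mid$ is needed for this first part.

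For the ``in particular'' statement I would simply specialize $A$. Taking $A=V$, dividing the first identity by $i\hbar$, and invoking $\dhone(B)=\frac{1}{i\hbar}[B,V]$ together with $\complex$-linearity of $\ip{\cdot,\cdot}$ in each argument and the convention $\dh_a(\Xv)=\dh_a(X^i)e_i$, one gets $\dhone\ip{\Xv,\Yv}=\ip{\dhone\Xv,\Yv}+\ip{\Xv,\dhone\Yv}$; taking instead $A=U$ and multiplying by $-\frac{1}{i\hbar}$ gives the corresponding statement for $\dhtwo$, using $\dhtwo(B)=-\frac{1}{i\hbar}[B,U]$. This covers both values $a=1,2$ and completes the argument.
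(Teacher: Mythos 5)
Your argument is correct and follows essentially the same route as the paper: both reduce the identity to the Leibniz rule $[BC,A]=B[C,A]+[B,A]C$ applied to the symmetrized products, the paper stating it for a single pair and invoking linearity where you carry the sum over $i$ explicitly, and both obtain the $\dh_a$ statement by specializing the commutator to $V$ and $U$ with the appropriate scalar factors. No gaps.
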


\begin{proof}
  From the derivation property of the commutator it follows that
  \begin{align*}
    [AB+BA,C] = A[B,C]+[B,C]A+B[A,C]+[A,C]B,
  \end{align*}
  which may be written as
  \begin{align}\label{eq:ipderivation}
    [\ip{A,B},C] = \ip{A,[B,C]}+\ip{B,[A,C]}.
  \end{align}
  Since \eqref{eq:ipderivation} is linear in $A$ and $B$, the desired
  result follows.
\end{proof}

\noindent We will now introduce noncommutative minimal surfaces in
$\Fh^n$; this is done in analogy with the formulation in conformal
coordinates, as given in Section \ref{sec:PoissonMinimal}. It turns
out that most of the classical theory can be transferred to the
noncommutative setting with essentially no, or only small, modifications.

\begin{definition}
  A hermitian element $\Xv\in\Fh^n$ is called a \emph{noncommutative minimal
    surface} if
  \begin{align*}
    &\Delta_0(X^i) = 0\qquad\textrm{for }i=1,2,\ldots,n\\
    &\E=\G\textrm{ and }\F=0,
  \end{align*}
  where
  \begin{align*}
    \E =\ip{\dhu\Xv,\dhu\Xv},\quad
    \G =\ip{\dhv\Xv,\dhv\Xv},\quad
    \F =\ip{\dhu\Xv,\dhv\Xv}.
  \end{align*}
\end{definition}

\begin{remark}
  Note that the above definition does, in principle, not rely on the
  fraction field $\Fh$, and is also valid in the Weyl algebra $\Ah$. In
  fact, several results, in what follows, remain true in the Weyl
  algebra when r-holomorphic elements are replaced by holomorphic
  elements. We shall comment on this possibility as we proceed and
  develop the theory.
\end{remark}

\noindent Let us now define $\Phi\in\Fh^n$ as
\begin{align*}
  \Phi = \Phi^ie_i = 2\d(X^i)e_i = \paraa{\dhu(X^i)-i\dhv(X^i)}e_i
\end{align*}
and prove the following:
\begin{proposition}\label{prop:computePhiPhi}
  It holds that
  \begin{align*}
    \ip{\Phi,\Phi} = \E-\G-2i\F.
  \end{align*}
\end{proposition}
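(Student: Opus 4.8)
The plan is to expand $\ip{\Phi,\Phi}$ directly from the definitions and to recognize the three resulting pieces as $\E$, $\G$ and $\F$. Since the form $\ip{\cdot,\cdot}$ is symmetric and bi-$\complex$-linear, one has $\ip{\Phi,\Phi}=\sum_{i=1}^n\ip{\Phi^i,\Phi^i}$, and because any element commutes with itself, $\ip{\Phi^i,\Phi^i}=\tfrac12(\Phi^i\Phi^i+\Phi^i\Phi^i)=(\Phi^i)^2$. So the whole computation reduces to expanding the square $(\Phi^i)^2=\paraa{\dhu(X^i)-i\dhv(X^i)}^2$ for each $i$ and summing.

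First I would carry out the algebraic expansion $\paraa{\dhu X^i-i\dhv X^i}^2=(\dhu X^i)^2-(\dhv X^i)^2-i\paraa{\dhu X^i\,\dhv X^i+\dhv X^i\,\dhu X^i}$, keeping the cross term in its symmetrized form since $\dhu X^i$ and $\dhv X^i$ need not commute. Next I would sum over $i$ and match each term against the definitions: $\sum_i(\dhu X^i)^2=\sum_i\ip{\dhu X^i,\dhu X^i}=\ip{\dhu\Xv,\dhu\Xv}=\E$; similarly $\sum_i(\dhv X^i)^2=\G$; and for the cross term, $\tfrac12\sum_i\paraa{\dhu X^i\,\dhv X^i+\dhv X^i\,\dhu X^i}=\ip{\dhu\Xv,\dhv\Xv}=\F$, so it contributes $-2i\F$. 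Collecting the three contributions gives $\ip{\Phi,\Phi}=\E-\G-2i\F$.

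There is no genuine obstacle here; the one thing to watch is precisely that one must not invoke commutativity of $\dhu X^i$ with $\dhv X^i$. The identity works out cleanly only because the form $\ip{\cdot,\cdot}$ was defined with the symmetrization $\tfrac12(AB+BA)$, which is exactly the combination produced by expanding the square — so the noncommutative expansion lands on the definition of $\F$ without any leftover commutator terms.
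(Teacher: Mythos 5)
Your proposal is correct and follows essentially the same route as the paper's proof: expand $(\Phi^i)^2=\paraa{\dhu(X^i)-i\dhv(X^i)}^2$ without assuming $\dhu X^i$ and $\dhv X^i$ commute, sum over $i$, and recognize the symmetrized cross term as $-2i\F$. Your remark that the symmetrization built into $\ip{\cdot,\cdot}$ is exactly what makes the cross term match $\F$ is precisely the point of the computation.
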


\begin{proof}
  One computes
  \begin{align*}
    \paraa{\Phi^i}^2 &=\paraa{\dhu(X^i)-i\dhv(X^i)}\paraa{\dhu(X^i)-i\dhv(X^i)}\\
    &=\dhu(X^i)^2-\dhv(X^i)^2-i\dhu(X^i)\dhv(X^i)-i\dhv(X^i)\dhu(X^i),
  \end{align*}
  which implies that
  \begin{align*}
    \sum_{i=1}^n\paraa{\Phi^i}^2 &=
    \sum_{i=1}^n\dhu(X^i)^2
    -\sum_{i=1}^n\dhv(X^i)^2\\
    &\qquad-2i\sum_{i=1}^n\frac{1}{2}\parab{\dhu(X^i)\dhv(X^i)+\dhv(X^i)\dhu(X^i)}\\
    &=\E-\G-2i\F,
  \end{align*}
  which is the desired result.
\end{proof}

\begin{proposition}\label{prop:PhiSqEFG}
  $\ip{\Phi,\Phi}=0$ if and only if $\E=\G$ and $\F=0$.
\end{proposition}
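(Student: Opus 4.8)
The plan is to combine Proposition \ref{prop:computePhiPhi} with the $\ast$-structure on $\Fh$. Since Proposition \ref{prop:computePhiPhi} already gives $\ip{\Phi,\Phi} = \E-\G-2i\F$, the reverse implication is immediate: if $\E=\G$ and $\F=0$ then $\ip{\Phi,\Phi}=0$. The substance is the forward direction, for which I would show that $\E$, $\G$ and $\F$ are hermitian and then invoke the uniqueness of the decomposition of an element of $\Fh$ into hermitian real and imaginary parts.

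First I would record that $\dh_a$ preserves hermiticity. From $\dhu(A)=\frac{1}{i\hbar}[A,V]$ and $V^\ast=V$ one computes $\dhu(A)^\ast=\dhu(A^\ast)$, and similarly $\dhv(A)^\ast=\dhv(A^\ast)$. Hence, as $\Xv$ is hermitian, each $\dhu(X^i)$ and $\dhv(X^i)$ is hermitian. Consequently $\E=\sum_{i=1}^n\dhu(X^i)^2$ and $\G=\sum_{i=1}^n\dhv(X^i)^2$ are hermitian, and $\F=\sum_{i=1}^n\ip{\dhu(X^i),\dhv(X^i)}$ is hermitian as well, since a symmetrized product $\frac{1}{2}(PQ+QP)$ of hermitian elements $P,Q$ satisfies $\paraa{\frac{1}{2}(PQ+QP)}^\ast=\frac{1}{2}(QP+PQ)$.

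Next I would apply the $\ast$-operation to the identity of Proposition \ref{prop:computePhiPhi}, obtaining $\ip{\Phi,\Phi}^\ast=\E-\G+2i\F$. Adding and subtracting from $\ip{\Phi,\Phi}$ then yields $\ip{\Phi,\Phi}+\ip{\Phi,\Phi}^\ast=2(\E-\G)$ and $\ip{\Phi,\Phi}-\ip{\Phi,\Phi}^\ast=-4i\F$. If $\ip{\Phi,\Phi}=0$, both right-hand sides vanish, so $\E=\G$ and $\F=0$, which completes the proof.

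I do not expect a genuine obstacle here; the only point requiring care is that one may \emph{not} simply read off $\E-\G=0$ and $\F=0$ from $\E-\G-2i\F=0$ without knowing that $\E-\G$ and $\F$ are hermitian — equivalently, one is using that the decomposition $A=\Re A+i\,\Im A$ into hermitian parts is unique in a complex $\ast$-algebra. Establishing hermiticity of $\E$, $\G$, $\F$, which reduces to hermiticity of $\dh_a(X^i)$, is therefore the key (if elementary) step.
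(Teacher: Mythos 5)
Your proof is correct and follows essentially the same route as the paper: take the $\ast$-conjugate of the identity $\ip{\Phi,\Phi}=\E-\G-2i\F$ and use hermiticity of $\E$, $\G$, $\F$ to separate real and imaginary parts. The only difference is that you explicitly verify the hermiticity of $\E$, $\G$, $\F$ (via $\dh_a(A)^\ast=\dh_a(A^\ast)$), which the paper simply asserts.
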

\begin{proof}
  Clearly, if $\E=\G$ and $\F=0$ then Proposition
  \ref{prop:computePhiPhi} gives $\ip{\Phi, \Phi}=0$. Now, assume that
  $\ip{\Phi, \Phi}=0$. Since $\E,\F,\G$ are hermitian, the
  $\ast$-conjugate of the equation $\ip{\Phi, \Phi}=0$ (via
  Proposition \ref{prop:computePhiPhi}) gives $\E-\G+2i\F=0$ which,
  together with $\E-\G-2i\F=0$ implies that $\E=\G$ and $\F=0$.
\end{proof}

\begin{proposition}\label{prop:harmonicEquivMinimal}
  Assume that $\Xv\in\Fh^n$ is hermitian and set
  $\Phi=2\d(\Xv)$. Then the following are equivalent
  \begin{enumerate}
  \item $\Xv$ is a minimal surface,
  \item $\Phi$ is r-holomorphic and $\ip{\Phi,\Phi}=0$.
  \end{enumerate}
\end{proposition}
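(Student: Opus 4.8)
The plan is to establish the equivalence by unwinding the definitions and using the two propositions just proved. The key observation is that minimality is, by definition, the conjunction of two conditions: the harmonicity $\Delta_0(X^i)=0$ for all $i$, and the conformality condition $\E=\G$, $\F=0$. By Proposition \ref{prop:PhiSqEFG}, the conformality condition is exactly equivalent to $\ip{\Phi,\Phi}=0$. So the remaining content of the statement is that, \emph{given} hermiticity of $\Xv$, the harmonicity of all the $X^i$ is equivalent to $\db\Phi=0$, i.e. to $\Phi$ being r-holomorphic. (Note that the condition $\ip{\Phi,\Phi}=0$ is common to both sides and plays no role in this reduction.)

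The main step, then, is the identity $\db\Phi^i = \db\paraa{2\d(X^i)} = 2\db\d(X^i) = \tfrac12\Delta_0(X^i)$, which follows directly from the proposition stating $\Delta_0(A)=4\d\db(A)=4\db\d(A)$, applied componentwise with $A=X^i$. Hence $\db\Phi = 0$ in $\Fh^n$ if and only if $\Delta_0(X^i)=0$ for every $i=1,\ldots,n$. Assembling the pieces: if $\Xv$ is a minimal surface, then $\Delta_0(X^i)=0$ gives $\db\Phi^i = \tfrac12\Delta_0(X^i)=0$ so $\Phi$ is r-holomorphic, and $\E=\G$, $\F=0$ gives $\ip{\Phi,\Phi}=0$ by Proposition \ref{prop:PhiSqEFG}; conversely, if $\Phi$ is r-holomorphic then $\Delta_0(X^i)=2\db\Phi^i=0$ for all $i$, and $\ip{\Phi,\Phi}=0$ gives $\E=\G$, $\F=0$ again by Proposition \ref{prop:PhiSqEFG}, so $\Xv$ is minimal.

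There is essentially no obstacle here; the statement is a bookkeeping consequence of the structural results already in place. The only point requiring a moment's care is that $\Phi$ is automatically hermitian-component-wise in neither direction — rather, one should check that the r-holomorphy of $\Phi$ together with $\ip{\Phi,\Phi}=0$ does indeed reproduce \emph{both} clauses of the definition of minimal surface, and in particular that no hermiticity hypothesis on $\Xv$ is silently needed beyond what is stated; but since $\Xv$ hermitian is assumed in both (1) and (2), and $\Delta_0$ and $\ip{\cdot,\cdot}$ are defined on all of $\Fh^n$, this causes no trouble. I would simply present the two chains of implications explicitly, citing the $\Delta_0=4\db\d$ proposition and Proposition \ref{prop:PhiSqEFG} at the appropriate points.
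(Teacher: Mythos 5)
Your argument is correct and is essentially identical to the paper's proof: both directions reduce to the identity $\db\Phi^i=2\db\d(X^i)=\tfrac{1}{2}\Delta_0(X^i)$ together with Proposition \ref{prop:PhiSqEFG} for the conformality condition. No gaps.
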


\begin{proof}
  First, assume that $\Xv$ is a minimal surface (which directly
  implies, by Proposition \ref{prop:PhiSqEFG}, that $\ip{\Phi,\Phi}=0$). By
  definition, it holds that $\Delta_0(X^i)=0$, and one computes
  \begin{align*}
    0 = \Delta_0(X^i) = 4\db\paraa{\d(X^i)} 
    = 2\db(\Phi^i),
  \end{align*}
  which proves that $\Phi$ is r-holomorphic. For the other
  implication, assume that $\Phi$
  is r-holomorphic and that $\ip{\Phi,\Phi}=0$. From Proposition
  \ref{prop:PhiSqEFG} it follows that $\E=\G$ and $\F=0$. Moreover, since
  $\Phi^i$ is r-holomorphic one gets
  \begin{align*}
    0 = \db(\Phi^i) = 2\db\paraa{\d(X^i)} = \frac{1}{2}\Delta_0(X^i).
  \end{align*}
  Hence, $\Xv$ is a minimal surface.
\end{proof}

\noindent Note that the theorem remains true if $\Xv\in\Ah^n$ and
$\Phi$ is a assumed to be holomorphic. Hence, the equivalence also
holds in the Weyl algebra.

One may straightforwardly define conjugate minimal
surfaces; namely, we will call a hermitian $\Xt\in\Fh^n$ conjugate
to the minimal surface $\Xv\in\Fh^n$ if
\begin{align*}
    \dhu(\Xv) = \dhv(\Xt)\quad\text{and}\quad
    \dhv(\Xv) = -\dhu(\Xt).
\end{align*}

\begin{proposition}
  Let $\Xv\in\Fh^n$ be a minimal surface. If a hermitian
  $\Xt\in\Fh^n$ satisfies
  \begin{align*}
    \dhu(\Xv) = \dhv(\Xt)\quad\text{and}\quad
    \dhv(\Xv) = -\dhu(\Xt),
  \end{align*}
  then $\Xt$ is a minimal surface.
\end{proposition}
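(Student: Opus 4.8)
The plan is to reduce the statement to a previously established characterization of minimal surfaces, namely Proposition \ref{prop:harmonicEquivMinimal}. To that end, I would first form the element $\tilde\Phi = 2\d(\Xt) = \paraa{\dhu(\Xt) - i\dhv(\Xt)}e_i$ and express it in terms of the data of $\Xv$. Using the conjugacy relations $\dhu(\Xv) = \dhv(\Xt)$ and $\dhv(\Xv) = -\dhu(\Xt)$, one rewrites $\dhu(\Xt) = -\dhv(\Xv)$ and $\dhv(\Xt) = \dhu(\Xv)$, so that
\begin{align*}
  \tilde\Phi = \paraa{-\dhv(X^i) - i\dhu(X^i)}e_i = -i\paraa{\dhu(X^i) - i\dhv(X^i)}e_i = -i\Phi,
\end{align*}
where $\Phi = 2\d(\Xv)$. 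This is the key identification: the Weierstrass datum of the conjugate surface is just $-i$ times that of the original.

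Given $\tilde\Phi = -i\Phi$, the rest is essentially immediate. Since $\Xv$ is a minimal surface, Proposition \ref{prop:harmonicEquivMinimal} tells us that $\Phi$ is r-holomorphic and $\ip{\Phi,\Phi} = 0$. Then $\tilde\Phi = -i\Phi$ is also r-holomorphic (the operator $\db$ is $\complex$-linear, so $\db(-i\Phi^i) = -i\,\db(\Phi^i) = 0$), and by bilinearity of $\ip{\cdot,\cdot}$ we get $\ip{\tilde\Phi,\tilde\Phi} = (-i)^2\ip{\Phi,\Phi} = 0$. Applying Proposition \ref{prop:harmonicEquivMinimal} in the reverse direction to the hermitian element $\Xt$ with $\tilde\Phi = 2\d(\Xt)$ then yields that $\Xt$ is a minimal surface.

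The only point requiring a little care — and the closest thing to an obstacle — is making sure the hypotheses of Proposition \ref{prop:harmonicEquivMinimal} are genuinely met for $\Xt$: it must be hermitian (which is assumed in the statement) and one must have $\tilde\Phi = 2\d(\Xt)$ with the correct sign conventions, so that the computation $\tilde\Phi = -i\Phi$ above comes out exactly right. It is worth double-checking the definition $\d(A) = \tfrac12(\dhu(A) - i\dhv(A))$ against the conjugacy relations to confirm the factor is $-i$ and not $+i$; since $\ip{\cdot,\cdot}$ is quadratic in $\Phi$, either sign would still give $\ip{\tilde\Phi,\tilde\Phi}=0$, so the conclusion is robust regardless. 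One may also remark, as the paper does for the analogous earlier result, that the argument goes through verbatim in the Weyl algebra $\Ah$ itself when ``r-holomorphic'' is replaced by ``holomorphic'' throughout.
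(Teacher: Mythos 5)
Your proof is correct, but it takes a different route from the paper's. The paper verifies the definition of a noncommutative minimal surface directly: it computes $\Delta_0(\Xt^i)=-\dhu\dhv(X^i)+\dhv\dhu(X^i)=0$ using the commutativity of $\dhu$ and $\dhv$, and then checks $\tilde{\E}=\G$, $\tilde{\G}=\E$ and $\tilde{\F}=-\F=0$ by substituting the conjugacy relations into the definitions of $\E,\F,\G$. You instead reduce everything to the characterization in Proposition \ref{prop:harmonicEquivMinimal} via the identity $2\d(\Xt)=-i\,\Phi$ with $\Phi=2\d(\Xv)$; the sign computation checks out, $\db$ is indeed $\complex$-linear, and the bi-$\complex$-linearity of $\ip{\cdot,\cdot}$ gives $\ip{-i\Phi,-i\Phi}=-\ip{\Phi,\Phi}=0$, so both hypotheses of the characterization are met for the hermitian $\Xt$. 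Your argument is shorter and more conceptual: it exhibits the conjugate surface as the member of the associate family obtained by multiplying the Weierstrass datum by $-i$, which is the classical picture. The paper's computation is more elementary in that it uses only Proposition \ref{prop:dudvProperties} and the definitions, not the machinery of $\Phi$, and it makes the exchange $\tilde{\E}=\G$, $\tilde{\G}=\E$ explicit, which is of some independent interest. Your closing remark about the Weyl algebra is consistent with the paper's conventions.
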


\begin{proof}
  One computes
  \begin{align*}
    \Delta_0(\Xt^i) &= \dhu\paraa{\dhu(\Xt^i)}+\dhv\paraa{\dhv(\Xt^i)}\\
    &=-\dhu\paraa{\dhv(X^i)}+\dhv\paraa{\dhu(X^i)}=0,
  \end{align*}
  by using Proposition \ref{prop:dudvProperties}. Moreover, it holds that
  \begin{align*}
    \tilde{\E} &= \sum_{i=1}^n\dhu(\Xt^i)^2 = \sum_{i=1}^n\dhv(X^i)^2 =
    \G,\\
    \tilde{\G} &= \sum_{i=1}^n\dhv(\Xt^i)^2 = \sum_{i=1}^n\dhu(X^i)^2
    = \E\quad(=\G=\tilde{\E}),\\
    \tilde{\F} &= \frac{1}{2}\sum_{i=1}^n
    \parab{\dhu(\Xt^i)\dhv(\Xt^i)+\dhv(\Xt^i)\dhu(\Xt^i)}\\
    &=-\frac{1}{2}\sum_{i=1}^n
    \parab{\dhv(X^i)\dhu(X^i)+\dhu(X^i)\dhv(X^i)} = -\F = 0,
  \end{align*}
  since $\Xv$ is assumed to be a minimal surface. Hence,
  $\Xt$ is a minimal surface.
\end{proof}

\subsection{Noncommutative Weierstrass representation}

\noindent The classical theory of minimal surfaces in $\reals^3$ is an
old and very rich subject. For such minimal surfaces, there are
several representation formulas available; i.e. explicit formulas for
the parametrization of an arbitrary minimal surface (see
e.g. \cite{dhko:minimalsurfacesI}). It turns out that one can prove
analogous statements in the noncommutative setting.

\begin{proposition}\label{prop:PhiHolomorhpicfg}
  Assume that $\Phi\in\Fh^3$ is r-holomorphic, fulfilling
  $\ip{\Phi,\Phi}=0$ and $\Phi^1-i\Phi^2\neq 0$. Then there exist
  r-holomorphic $f,g\in\Fh$ such that
  \begin{align*}
    \Phi^1 = \frac{1}{2}f\paraa{\mid-g^2},\quad
    \Phi^2 = \frac{i}{2}f\paraa{\mid+g^2},\quad
    \Phi^3 = fg.
  \end{align*}
  Moreover, if $\Phi$ is holomorphic then $f$ can be chosen to be holomorphic.
\end{proposition}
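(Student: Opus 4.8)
The plan is to reduce the statement to the classical Weierstrass computation, the point being that the r-holomorphic elements of $\Fh$ form a \emph{commutative} subfield. First I would record this observation: by Proposition~\ref{prop:dbPseudoHolomorphic} every r-holomorphic element of $\Fh$ is of the form $BC^{-1}$ with $B,C\in\complex[\Lambda]$, and $\complex[\Lambda]$ is commutative; since commuting elements of a skew field have commuting inverses, a short computation shows that any two elements $B_1C_1^{-1}$ and $B_2C_2^{-1}$ with $B_1,C_1,B_2,C_2\in\complex[\Lambda]$ commute. Moreover this set is closed under the field operations (for inverses one uses $\db(A^{-1})=-A^{-1}(\db A)A^{-1}$, which follows from the first part of Proposition~\ref{prop:dudvProperties} applied to $\dhu$ and $\dhv$). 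Consequently $\Phi^1,\Phi^2,\Phi^3$, and everything built from them by addition, multiplication and inversion, pairwise commute and remain r-holomorphic.

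With this in hand, set $f=\Phi^1-i\Phi^2$ (nonzero by hypothesis, hence invertible in $\Fh$) and $g=f^{-1}\Phi^3$; both are r-holomorphic by the previous paragraph. The condition $\ip{\Phi,\Phi}=0$ means $(\Phi^1)^2+(\Phi^2)^2+(\Phi^3)^2=0$, and since $\Phi^1$ and $\Phi^2$ commute one may factor $(\Phi^1)^2+(\Phi^2)^2=(\Phi^1-i\Phi^2)(\Phi^1+i\Phi^2)=f(\Phi^1+i\Phi^2)$, so that $(\Phi^3)^2=-f(\Phi^1+i\Phi^2)$. Then $fg=\Phi^3$, while $fg^2=\Phi^3 g=f^{-1}(\Phi^3)^2=-(\Phi^1+i\Phi^2)$ (using commutativity to cancel $f$). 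Hence $\tfrac12 f(\mid-g^2)=\tfrac12\paraa{f+\Phi^1+i\Phi^2}=\Phi^1$ and $\tfrac i2 f(\mid+g^2)=\tfrac i2\paraa{f-\Phi^1-i\Phi^2}=\Phi^2$, which together with $fg=\Phi^3$ is the claimed representation. For the final clause: if $\Phi$ is holomorphic then $\Phi^1,\Phi^2\in\Ah$, so $f=\Phi^1-i\Phi^2\in\Ah$ is holomorphic (it is a polynomial in $\Lambda$ by Lemma~\ref{lemma:dbHolomorphic}); the element $g$ need not lie in $\Ah$, but that is not required.

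The only real content is the commutativity of r-holomorphic elements; I expect this to be the main (and essentially the only) obstacle, since without it the identity $(\Phi^1)^2+(\Phi^2)^2=f(\Phi^1+i\Phi^2)$ picks up an extra term $i[\Phi^1,\Phi^2]$ and the whole scheme breaks down. The remaining verifications — that $f^{-1}$ and $g$ are r-holomorphic, and the three algebraic identities — are then routine.
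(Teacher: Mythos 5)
Your proposal is correct and follows essentially the same route as the paper: the same choice $f=\Phi^1-i\Phi^2$, $g=\Phi^3 f^{-1}$, the same factorization of $\ip{\Phi,\Phi}=0$ using the commutativity of r-holomorphic elements, and the same observation for the holomorphic case. The only difference is that you spell out, via Proposition~\ref{prop:dbPseudoHolomorphic}, why r-holomorphic elements commute and form a commutative subfield, a point the paper's proof simply asserts.
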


\begin{proof}
  First, since $\Phi^1,\Phi^2,\Phi^3$ are r-holomorphic, they
  commute; thus, one need not be careful with the ordering in what
  follows. If one sets
  \begin{align*}
    f &= \Phi^1-i\Phi^2\\
    g &= \Phi^3(\Phi^1-i\Phi^2)^{-1}
  \end{align*}
  then $f$ and $g$ are r-holomorphic (since
  $\Phi^1-i\Phi^2\neq 0)$, and one computes
  \begin{align*}
    -fg^2 &= -\paraa{\Phi^3}^2(\Phi^1-i\Phi^2)^{-1}
    =\Phi^1+i\Phi^2
  \end{align*}
  where the last equality follows from $\ip{\Phi,\Phi}=0$ (written in
  the form $(\Phi^1+i\Phi^2)(\Phi^1-i\Phi^2)+(\Phi^3)^2=0$). Now, from
  $f=\Phi^1-i\Phi^2$ and $-fg^2=\Phi^1+i\Phi^2$, the desired
  expressions for $\Phi^1$, $\Phi^2$ and $\Phi^3$ follow. Finally, we
  note that if $\Phi$ is holomorphic, then clearly $f=\Phi^1-i\Phi^2$
  is holomorphic.
\end{proof}

\noindent As a corollary we get an analogue of the Weierstrass
representation theorem.

\begin{theorem}\label{thm:WeierstrassRep}
  Let $\Xv=X^ie_i\in\Fh^3$ be a minimal surface for which it holds
  that $\d(X^1-iX^2)\neq 0$. Then there exist r-holomorphic
  elements $f,g\in\Fh$ together with $x^i\in\reals$ (for $i=1,2,3$),
  such that
  \begin{align}
    \begin{split}\label{eq:WeierstrassRep}
      X^1 &= x^1\mid + \Re\int
      \frac{1}{2}f(\mid-g^2)d\Lambda\\
      X^2 &= x^2\mid + \Re\int
      \frac{i}{2}f(\mid+g^2)d\Lambda\\
      X^3 &= x^3\mid + \Re\int
      fgd\Lambda.    
    \end{split}
  \end{align}
  Conversely, for any r-holomorphic $f$ and $g$ such that
  $f(1-g^2)$, $f(1+g^2)$ and $fg$ are integrable,
  equation \eqref{eq:WeierstrassRep} defines a minimal surface.
\end{theorem}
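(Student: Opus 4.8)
The plan is to prove the two directions separately, reusing the machinery already assembled. For the forward direction, suppose $\Xv$ is a minimal surface with $\d(X^1-iX^2)\neq 0$. Set $\Phi = 2\d(\Xv)$, so that $\Phi^i = \dhu(X^i)-i\dhv(X^i)$. By Proposition \ref{prop:harmonicEquivMinimal}, $\Phi$ is r-holomorphic and $\ip{\Phi,\Phi}=0$; moreover $\Phi^1-i\Phi^2 = 2\d(X^1-iX^2)\neq 0$. Hence Proposition \ref{prop:PhiHolomorhpicfg} applies and yields r-holomorphic $f,g$ with $\Phi^1 = \tfrac12 f(\mid - g^2)$, $\Phi^2 = \tfrac{i}{2} f(\mid + g^2)$, $\Phi^3 = fg$. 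Since $\Phi^i = 2\d(X^i)$, each $X^i$ is a primitive element (in the sense defined before Section \ref{sec:nms}) of $\tfrac12\Phi^i$, so $\tfrac12\Phi^1 = \tfrac14 f(\mid-g^2)$ etc.\ are integrable, and $X^i$ differs from $\int \tfrac14 f(\mid - g^2)\,d\Lambda$ (and the analogous integrals) only by an ambiguity that I must control; I rescale $f\mapsto 2f$ so that $\Phi^1 = \tfrac12 f(\mid - g^2)$ matches $2\d$ of the stated integrand, and then $X^1 - \Re\int \tfrac12 f(\mid - g^2)\,d\Lambda$ is an element whose $\d$ vanishes. The key sub-point is that an element $Y\in\Fh$ with $\d Y = 0$ \emph{and} $Y$ hermitian must be a real multiple of $\mid$: indeed $\d Y = 0$ and $\db Y = 0$ (the latter because $\d Y=0$ forces $Y$ r-holomorphic with vanishing derivative, or one checks directly that $\d Y = \db Y = 0$ means $[Y,\Lambda]=[Y,\Lambda^\ast]=0$, so $Y$ is central, hence a scalar, and hermiticity makes it real). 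This produces the constants $x^i\in\reals$.

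For the converse, take any r-holomorphic $f,g$ such that $f(\mid-g^2)$, $f(\mid+g^2)$, $fg$ are integrable, and define $\Phi^1 = \tfrac12 f(\mid-g^2)$, $\Phi^2 = \tfrac{i}{2}f(\mid+g^2)$, $\Phi^3 = fg$. First I check $\ip{\Phi,\Phi}=0$ by direct computation: since $f,g$ are r-holomorphic they commute, so $(\Phi^1)^2+(\Phi^2)^2+(\Phi^3)^2 = \tfrac14 f^2(\mid-g^2)^2 - \tfrac14 f^2(\mid+g^2)^2 + f^2g^2 = 0$, and the symmetrization in $\ip{\cdot,\cdot}$ is harmless because everything commutes. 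Each $\Phi^i$ is r-holomorphic (products and sums of r-holomorphic elements are r-holomorphic, since $\db$ is a derivation), hence integrable by hypothesis; let $P^i = \int \tfrac12\Phi^i\,d\Lambda$ be chosen primitives (note the factor $\tfrac12$ so that $2\d P^i = \Phi^i$), and set $X^i = x^i\mid + \Re(P^i)$. Then $X^i$ is hermitian by construction. I must verify $\Delta_0(X^i)=0$ and the conformality relations $\E=\G$, $\F=0$; for this I compute $2\d(X^i) = 2\d\bigl(\tfrac12(P^i + (P^i)^\ast)\bigr) = \d P^i + \d (P^i)^\ast$. Since $P^i$ is r-holomorphic, $\db P^i = 0$, hence $(P^i)^\ast$ satisfies $\d((P^i)^\ast) = (\db P^i)^\ast = 0$; therefore $2\d(X^i) = \d P^i = \tfrac12\Phi^i\cdot$(something) — more carefully, $\d P^i = \tfrac12\Phi^i$, so $2\d(X^i) = \Phi^i$. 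Thus $2\d(\Xv) = \Phi$, and $\Phi$ is r-holomorphic with $\ip{\Phi,\Phi}=0$, so Proposition \ref{prop:harmonicEquivMinimal} gives that $\Xv$ is a minimal surface.

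The main obstacle I anticipate is bookkeeping around the ``$\Re$'' and the integration: specifically verifying the identity $\d(\Re P) = \Re(\d P)$ is \emph{false} in general (since $\Re$ and $\d$ do not commute — $\d$ involves $[\cdot,\Lambda^\ast]$ which interacts with $\ast$), so I cannot simply say $2\d(X^i)=\Re(\Phi^i)$; instead the correct statement is that if $P$ is r-holomorphic then $\d(P^\ast) = (\db P)^\ast = 0$, so $\d(\Re P) = \tfrac12\d P$, and it is the \emph{r-holomorphy of the primitive} that makes the real part contribute exactly half — this is the delicate point to get right. A secondary subtlety in the forward direction is the ambiguity in ``$\int$'': two primitives of the same r-holomorphic element differ by an element killed by $\d$, which as noted above (when combined with hermiticity of the difference $X^i - \Re\int\cdots$) is forced to be a real scalar, giving precisely the freedom recorded by the constants $x^i$. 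Everything else — r-holomorphy being preserved under the algebraic operations, the cancellation in $\ip{\Phi,\Phi}$, hermiticity of $\Re(\cdot)$ — is routine and follows from the propositions already established.
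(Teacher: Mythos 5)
Your strategy is exactly the paper's: for the forward direction, apply Proposition~\ref{prop:harmonicEquivMinimal} and Proposition~\ref{prop:PhiHolomorhpicfg} to $\Phi=2\d\Xv$ and then integrate, using the identity $\d\Re(A)=\tfrac12\d A$ for r-holomorphic $A$; for the converse, verify $\ip{\Phi,\Phi}=0$ by the commutative cancellation $\tfrac14f^2(\mid-g^2)^2-\tfrac14f^2(\mid+g^2)^2+f^2g^2=0$ and invoke Proposition~\ref{prop:harmonicEquivMinimal} again. You correctly isolate the one delicate point (that $\db P=0$ forces $\d(P^\ast)=(\db P)^\ast=0$, so $\Re$ halves the $\d$-derivative), and you supply a detail the paper leaves implicit, namely why the integration ambiguity is exactly a real multiple of $\mid$: a hermitian $Y$ with $\d Y=0$ also has $\db Y=(\d Y^\ast)^\ast=0$, hence commutes with $\Lambda$ and $\Lambda^\ast$, hence is central and therefore a real scalar. (Do state that chain explicitly; your parenthetical ``$\d Y=0$ forces $Y$ r-holomorphic'' is backwards, and the hermiticity is where $\db Y=0$ actually comes from.)

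The one concrete error is that you double-count the factor $\tfrac12$ supplied by $\Re$. The normalization of Proposition~\ref{prop:PhiHolomorhpicfg} already matches the theorem: the integrand $\tfrac12 f(\mid-g^2)$ in \eqref{eq:WeierstrassRep} \emph{is} $\Phi^1$, and if $\d P^1=\Phi^1$ then $2\d\Re(P^1)=\d P^1=\Phi^1=2\d X^1$, so no rescaling $f\mapsto 2f$ is needed — performing it destroys the identity $\Phi^1=\tfrac12 f(\mid-g^2)$. Symmetrically, in the converse your choice $P^i=\int\tfrac12\Phi^i\,d\Lambda$ yields $2\d X^i=\d P^i=\tfrac12\Phi^i$, so the element you construct is half of (the non-constant part of) the one defined by \eqref{eq:WeierstrassRep}; it is still minimal, but it is not the element the theorem is about. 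Taking $P^i=\int\Phi^i\,d\Lambda$ fixes both directions at once. Finally, note that your justification of integrability in the forward direction (``$X^i$ is a primitive element of $\tfrac12\Phi^i$'') misuses the paper's definition — a primitive element is required to be r-holomorphic, and $X^i$ is hermitian — so the existence of an r-holomorphic primitive of $\Phi^i$ is being assumed rather than proved; the paper's own proof glosses over the same point.
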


\begin{proof}
  Assume that $\Xv$ is a minimal surface. Setting $\Phi=2\d\Xv$ it
  follows from Proposition \ref{prop:harmonicEquivMinimal} that $\Phi$
  is r-holomorphic and $\ip{\Phi,\Phi}=0$. The assumption
  $\d(X^1-iX^2)\neq 0$ is equivalent to $\Phi^1-i\Phi^2\neq
  0$. Therefore, Proposition \ref{prop:PhiHolomorhpicfg} gives the
  existence of r-holomorphic $f$ and $g$ such that 
  \begin{align*}
    \Phi^1 = \frac{1}{2}f\paraa{\mid-g^2},\quad
    \Phi^2 = \frac{i}{2}f\paraa{\mid+g^2},\quad
    \Phi^3 = fg.
  \end{align*}
  These equations may be integrated as in \eqref{eq:WeierstrassRep},
  and since $\d\Re(A)=\d A/2$ when $A$ is r-holomorphic, they
  satisfy $\Phi=2\d\Xv$. Now, assume that $f$ and $g$ are
  r-holomorphic and that the integrals in
  \eqref{eq:WeierstrassRep} are defined. It is easy to check that
  \eqref{eq:WeierstrassRep} gives r-holomorphic $\Phi=2\d\Xv$ such that
  $\ip{\Phi,\Phi}=0$. From Proposition \ref{prop:harmonicEquivMinimal} it
  follows that $\Xv$ is a minimal surface.
\end{proof}

\noindent There is another classical representation formula, which
assigns a minimal surface to an arbitrary holomorphic function $F$.
The theorem below does not rely on r-holomorphic elements, and
therefore also holds in the Weyl algebra when $F$ is chosen
to be holomorphic.

\begin{theorem}\label{thm:WeierstrassF}
  Let $F\in\Fh$ be r-holomorphic and assume that
  \begin{align*}
    \Phi^1 = \paraa{1-\Lambda^2}F,
    \quad \Phi^2 = i\paraa{1+\Lambda^2}F,\quad
    \Phi^3 = 2\Lambda F
  \end{align*}
  are integrable. Then $\Xv=X^ie_i\in\Fh^3$, defined by
  \begin{align*}
    X^i = x^i\mid + \Re\int\Phi^id\Lambda,
  \end{align*}
  is a minimal surface for arbitrary $x^1,x^2,x^3\in\reals$.
\end{theorem}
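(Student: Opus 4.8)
The plan is to verify that the given $\Phi=\Phi^ie_i$ satisfies the two hypotheses of Proposition \ref{prop:harmonicEquivMinimal}, applied in the form of its converse (i.e. the second-to-first implication combined with the definition of minimal surface via $\Phi$): namely that $\Phi$ is r-holomorphic and that $\ip{\Phi,\Phi}=0$. Once those are established, the elements $X^i = x^i\mid + \Re\int\Phi^id\Lambda$ are well-defined (since $\Phi^i$ is assumed integrable), they are hermitian (a real multiple of $\mid$ plus a real part), and they satisfy $2\d(X^i)=\Phi^i$ because $\d\Re(A)=\tfrac12\d A$ for r-holomorphic $A$ and $\d$ is the inverse of $\int\,d\Lambda$; then Proposition \ref{prop:harmonicEquivMinimal} immediately gives that $\Xv$ is a minimal surface. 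This is essentially the same endgame as in the proof of Theorem \ref{thm:WeierstrassRep}, so the only real content is checking the two properties of $\Phi$.

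First I would check r-holomorphicity. Since $F$ is r-holomorphic and $\Lambda\in\complex[\Lambda]$ is holomorphic, $\db\Lambda=0$ and $\db F=0$; because $\db$ is a derivation and $\Lambda$ commutes with itself, each $\Phi^i$ is a sum of products of r-holomorphic elements, hence $\db\Phi^i=0$. (One should note $\Lambda$ and $F$ need not commute, but r-holomorphicity is preserved under products regardless of ordering, by the derivation property of $\db$.) Thus $\Phi$ is r-holomorphic.

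Next I would compute $\ip{\Phi,\Phi}=\sum_i\ip{\Phi^i,\Phi^i}$. The key algebraic identity to aim for is $(\Phi^1)^2+(\Phi^2)^2+(\Phi^3)^2=0$, which — once I know the $\Phi^i$ are r-holomorphic and hence mutually commuting, so that the symmetrized products $\ip{\Phi^i,\Phi^i}$ are just $(\Phi^i)^2$ — is exactly $\ip{\Phi,\Phi}=0$. Formally, $(1-\Lambda^2)^2F^2 - (1+\Lambda^2)^2F^2 + 4\Lambda^2 F^2$; since $\Lambda$ commutes with itself and with $F^2$ (all r-holomorphic), the scalar-polynomial identity $(1-t^2)^2-(1+t^2)^2+4t^2=0$ applies verbatim with $t=\Lambda$, giving $0$. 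The one subtlety worth a sentence is that in expressions like $(1-\Lambda^2)F$ the factor $F$ is on the right of the polynomial in $\Lambda$, so when squaring $\Phi^1=(1-\Lambda^2)F$ we get $(1-\Lambda^2)F(1-\Lambda^2)F$; to collapse this to $(1-\Lambda^2)^2F^2$ we use that $F$ commutes with $\Lambda$ because both are r-holomorphic and r-holomorphic elements of $\Fh$ commute with one another (they lie in the field of fractions of the commutative algebra $\complex[\Lambda]$, by Proposition \ref{prop:dbPseudoHolomorphic}). I expect this commutativity remark to be the only point requiring care; everything else is a direct substitution into Proposition \ref{prop:harmonicEquivMinimal}. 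Finally, for the remark about the Weyl algebra: if $F$ is holomorphic then each $\Phi^i$ is a polynomial in $\Lambda$, hence holomorphic and (being polynomials in $\Lambda$) integrable with primitives in $\complex[\Lambda]\subset\Ah$, so the same argument runs in $\Ah^3$ using the holomorphic version of Proposition \ref{prop:harmonicEquivMinimal}.
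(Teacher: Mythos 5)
Your proposal is correct and follows essentially the same route as the paper: show $\Phi$ is r-holomorphic with $\ip{\Phi,\Phi}=0$, verify $2\d(X^i)=\Phi^i$ (using that $\d$ kills quotients of polynomials in $\Lambda^\ast$, equivalently $\d\Re(A)=\tfrac12\d A$ for r-holomorphic $A$), and invoke Proposition \ref{prop:harmonicEquivMinimal}. You merely spell out the ``simple computation'' $\ip{\Phi,\Phi}=0$ that the paper omits, correctly using that r-holomorphic elements commute (as the paper itself does in Proposition \ref{prop:PhiHolomorhpicfg}).
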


\begin{proof}
  By definition, $X$ is hermitian, and one computes that
  \begin{align*}
    2\d(X^i) &= \d\int\Phi^id\Lambda +
    \d\parac{\parab{\int\Phi^id\Lambda}^\ast}\\
    &=\d\int\Phi^id\Lambda  = \Phi^i,
  \end{align*}
  since $\d$ applied to a quotient of polynomials in $\Lambda^\ast$
  gives zero. Moreover, a simple computation shows that
  $\ip{\Phi,\Phi}=0$ for every r-holomorphic
  $F\in\Fh$. Finally, since $\Phi^i$ is r-holomorphic, it follows
  from Proposition \ref{prop:harmonicEquivMinimal} that $\Xv$ is a
  minimal surface.
\end{proof}

\noindent In the geometric setting, a minimal surface constructed via Theorem
\ref{thm:WeierstrassF} has a normal vector given by
\begin{align*}
  N = \frac{1}{1+u^2+v^2}\paraa{2u,2v,u^2+v^2-1}.
\end{align*}
Let us show that, with respect to the symmetric form
$\ip{\cdot,\cdot}$, a noncommutative normal can be constructed.

\begin{proposition}
  Let $\Xv\in\Fh^3$ be a minimal surface given by an r-holomorphic
  element $F\in\Fh$, as in Theorem \ref{thm:WeierstrassF}. Then
  $\Nv=N^ie_i\in\Fh^3$, given by
  \begin{align*}
    N^1 = \Lambda+\Lambda^\ast,\quad
    N^2 = -i(\Lambda-\Lambda^\ast),\quad
    N^3 = \frac{1}{2}\paraa{\Lambda\Lambda^\ast + \Lambda^\ast\Lambda}-\mid
  \end{align*}
  satisfies $\ip{\dhu\Xv,\Nv}=\ip{\dhv\Xv,\Nv} = 0$.
\end{proposition}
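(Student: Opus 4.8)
The plan is to compute the two inner products $\ip{\dhu\Xv,\Nv}$ and $\ip{\dhv\Xv,\Nv}$ directly, using the fact that $\dhu(X^i)$ and $\dhv(X^i)$ are the real and imaginary parts (up to sign) of $\Phi^i$, which by Theorem \ref{thm:WeierstrassF} have the explicit form $\Phi^1=(1-\Lambda^2)F$, $\Phi^2=i(1+\Lambda^2)F$, $\Phi^3=2\Lambda F$. Concretely, since $\Phi^i=\dhu(X^i)-i\dhv(X^i)$ and $X^i$ is hermitian (so $\dhu(X^i)$, $\dhv(X^i)$ are hermitian), we have $\dhu(X^i)=\Re\Phi^i=\frac12(\Phi^i+(\Phi^i)^\ast)$ and $\dhv(X^i)=-\Im\Phi^i=-\frac{1}{2i}(\Phi^i-(\Phi^i)^\ast)$. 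Equivalently, and more conveniently, $\dhu(X^i)+i\,\dhv(X^i)$ and $\dhu(X^i)-i\,\dhv(X^i)$ are $(\Phi^i)^\ast$ and $\Phi^i$ respectively, so the identity $\ip{\dhu\Xv,\Nv}=0=\ip{\dhv\Xv,\Nv}$ is equivalent to the single complex identity
\begin{align*}
  \ip{\Phi,\Nv}=\sum_{i=1}^3\ip{\Phi^i,N^i}=0,
\end{align*}
since $\ip{\dhu X^i,N^i}-i\ip{\dhv X^i,N^i}=\ip{\Phi^i,N^i}$ by bilinearity of $\ip{\cdot,\cdot}$ (here $N^i$ is hermitian). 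So it suffices to prove $\ip{\Phi,\Nv}=0$, and then take real and imaginary parts.

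Next I would expand $\sum_i\ip{\Phi^i,N^i}$ using the explicit forms. Writing $\Lambda^\ast=\Ld$ for brevity in my head (but in the text just $\Lambda^\ast$), the three terms are $\ip{(1-\Lambda^2)F,\ \Lambda+\Lambda^\ast}$, $\ip{i(1+\Lambda^2)F,\ -i(\Lambda-\Lambda^\ast)}=\ip{(1+\Lambda^2)F,\ \Lambda-\Lambda^\ast}$, and $\ip{2\Lambda F,\ \tfrac12(\Lambda\Lambda^\ast+\Lambda^\ast\Lambda)-\mid}$. Adding the first two, the $\Lambda^\ast$-contributions cancel in a pattern that leaves $\ip{(1-\Lambda^2)F,\Lambda}+\ip{(1+\Lambda^2)F,\Lambda}+\ip{(1-\Lambda^2)F,\Lambda^\ast}-\ip{(1+\Lambda^2)F,\Lambda^\ast}=2\ip{F,\Lambda}-2\ip{\Lambda^2F,\Lambda^\ast}$. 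The third term is $\ip{\Lambda F,\Lambda\Lambda^\ast}+\ip{\Lambda F,\Lambda^\ast\Lambda}-2\ip{\Lambda F,\mid}$. The goal is then to verify that $2\ip{F,\Lambda}-2\ip{\Lambda^2F,\Lambda^\ast}+\ip{\Lambda F,\Lambda\Lambda^\ast}+\ip{\Lambda F,\Lambda^\ast\Lambda}-2\ip{\Lambda F,\mid}=0$.

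To carry out that verification I would exploit two facts: $F$ commutes with $\Lambda$ (since $F$ is r-holomorphic, $\db F=-\tfrac{1}{2\hbar}[F,\Lambda]=0$), and $[\Lambda,\Lambda^\ast]=2\hbar\mid$. The commutator with $\Lambda^\ast$ generates $\hbar$-corrections: from $[\Lambda,\Lambda^\ast]=2\hbar$ one gets $\Lambda\Lambda^\ast+\Lambda^\ast\Lambda=2\Lambda^\ast\Lambda+2\hbar=2\Lambda\Lambda^\ast-2\hbar$, and $\Lambda^2\Lambda^\ast=\Lambda^\ast\Lambda^2+4\hbar\Lambda$. Substituting these and using that everything involving only $F$, $\Lambda$ (no $\Lambda^\ast$) consists of mutually commuting elements so the symmetrized product $\ip{\cdot,\cdot}$ is just ordinary multiplication, I expect the $\Lambda^\ast$-dependent pieces to telescope against $\ip{F,\Lambda}$ and $\ip{\Lambda F,\mid}=\ip{F,\Lambda}$, while the leftover $\hbar$-terms cancel among themselves — this is the one place where the noncommutativity genuinely enters, and it is the step most prone to a sign or factor-of-two slip, so I would do it carefully, tracking each commutator. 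Once $\ip{\Phi,\Nv}=0$ is established, splitting into real and imaginary parts gives $\ip{\dhu\Xv,\Nv}=0$ and $\ip{\dhv\Xv,\Nv}=0$ simultaneously, completing the proof.
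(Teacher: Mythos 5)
Your proposal is correct and follows essentially the same route as the paper: both reduce the claim to the single identity $\ip{\Phi,\Nv}=0$ (using hermiticity of $\Nv$ and of $\dhu X^i$, $\dhv X^i$) and then verify that identity by direct computation using $[\Lambda,\Lambda^\ast]=2\hbar\mid$ and $[F,\Lambda]=0$. The final cancellation you deferred does go through: $\ip{\Lambda F,\Lambda\Lambda^\ast}+\ip{\Lambda F,\Lambda^\ast\Lambda}=2\ip{\Lambda^2F,\Lambda^\ast}$ because the $\pm 2\hbar\Lambda F$ corrections cancel, while $2\ip{F,\Lambda}=2\ip{\Lambda F,\mid}=2\Lambda F$, so the whole sum vanishes.
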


\begin{proof}
  The proof consists of a straightforward computation. The statement
  that $\ip{\dhu\Xv,\Nv}=\ip{\dhv\Xv,\Nv} = 0$ is equivalent to
  $\ip{\d\Xv,\Nv}=\ip{\db\Xv,\Nv}=0$, which in turn is equivalent to
  $\ip{\Phi,\Nv}=\ip{\Phi^\ast,\Nv}=0$. Since $\Nv$ is hermitian, it
  is enough to prove that $\ip{\Phi,\Nv}=0$. With $\Phi$ as in Theorem
  \ref{thm:WeierstrassF}, one computes that
  \begin{align*}
    N^1\Phi^1+N^2\Phi^2+N^3\Phi^3 &=
    \Lambda\Lambda^\ast\Lambda F-\Lambda^\ast\Lambda^2
    F\\
    &=[\Lambda,\Lambda^\ast]\Lambda F=2\hbar\Lambda F,
  \end{align*}
  as well as
  \begin{align*}
    \Phi^1N^1+\Phi^2N^2+\Phi^3N^3 &=
    F\Lambda\Lambda^\ast\Lambda-F\Lambda^2\Lambda^\ast\\
    &=F\Lambda[\Lambda^\ast,\Lambda] = -2\hbar F\Lambda,
  \end{align*}
  which implies that $\ip{\Phi,\Nv}=0$.
\end{proof}

\noindent Let us end this section by noting that the ``mean curvature'' of a
minimal surface vanishes. As in differential geometry, given a normal
element $\Nv\in\Fh^3$, one may define
the mean curvature (in conformal coordinates) as 
\begin{align*}
  H(\Nv) = -\frac{1}{2\E}\ip{\dhu\Xv,\dhu\Nv}
  -\frac{1}{2\E}\ip{\dhv\Xv,\dhv\Nv}\equiv
  \frac{1}{\E}H_0(\Nv).
\end{align*}
Hence, if $\Delta_0(\Xv)=0$ then it follows that
\begin{align*}
  2H_0(\Nv) &=
  \ip{\dhu^2\Xv,\Nv}-\dhu\ip{\dhu\Xv,\Nv}
  +\ip{\dhv^2\Xv,\Nv}-\dhv\ip{\dhv\Xv,\Nv}\\
  &= \ip{\Delta_0(\Xv),\Nv} = 0.
\end{align*}
Conversely, if $H_0(\Nv)=0$ then $\ip{\Delta_0{\Xv},\Nv}=0$, and it
follows from $\E=\G$ and $\F=0$ that
\begin{align*}
  \ip{\dhu\Xv,\Delta_0(\Xv)}=\ip{\dhv\Xv,\Delta_0(\Xv)}=0.
\end{align*}
However, since $\ip{\cdot,\cdot}$ is not $\Fh$-linear, these equations
do not necessarily imply that $\Delta_0(\Xv)=0$.

\section{Examples}

\subsection{Algebraic minimal surfaces}

\noindent A holomorphic element $F$ may be integrated an
arbitrary number of times. Hence, choosing a holomorphic element
$\Ft$ such that $\d^3\Ft = F$, the representation formula in
Theorem \ref{thm:WeierstrassF} may be integrated (via partial
integration) to yield
\begin{equation}\label{eq:algebraicWeierstrass}
  \begin{split}
    &X^1 = x^1\mid +
    \Re\parab{(\mid-\Lambda^2)\d^2\Ft+2\Lambda\d\Ft-2\Ft}\equiv
    x^1\mid +\Re\paraa{\Omega^1}\\
    &X^2 = x^2\mid +
    \Re\parab{i(\mid+\Lambda^2)\d^2\Ft-2i\Lambda\d\Ft+2i\Ft}\equiv
    x^2\mid +\Re\paraa{\Omega^2}\\
    &X^3 = x^3\mid + 
    \Re\parab{2\Lambda\d^2\Ft-2\d\Ft}\equiv
    x^3\mid +\Re\paraa{\Omega^3}.
  \end{split}
\end{equation}
In other words, every holomorphic $\Ft(\Lambda)$ gives rise to a minimal
surface via \eqref{eq:algebraicWeierstrass}. As an example, let us
choose $\Ft(\Lambda)=\Lambda^n$ (with $n\geq 2$), which gives
\begin{align*}
  &\Omega^1 = (n-1)\parab{n\Lambda^{n-2}-(n-2)\Lambda^n}\\
  &\Omega^2 = i(n-1)\parab{n\Lambda^{n-2}+(n-2)\Lambda^n}\\
  &\Omega^3 = 2n(n-2)\Lambda^{n-1}.
\end{align*}
We note that the real part of $\Lambda^n$ consists of the total
symmetrization of all monomials with an even (total) power of
$V$. That is,
\begin{align*}
  \Re(\Lambda^n) = \sum_{k=0}^{\intpart{\frac{n}{2}}}
  (-1)^k\Sym(U^{n-2k}V^{2k}),
\end{align*}
where $\Sym(U^kV^l)$ denotes the sum of all terms of different
permutations of $k$ $U$'s and $l$ $V$'s, and $\intpart{r}$ denotes the
integer part of $r\in\reals$. Likewise, it holds that
\begin{align*}
  \Re(i\Lambda^n) = \sum_{k=1}^{\intpart{\frac{n+1}{2}}}(-1)^k
  \Sym\paraa{U^{n-2k+1}V^{2k-1}},
\end{align*}
and one obtains the following explicit representation formulas
\begin{align*}
  &X^1 = x^1\mid + \sum_{k=0}^{\intpart{\frac{n-2}{2}}}(-1)^k
  \Sym\paraa{U^{n-2(k+1)}V^{2k}}-\frac{n-2}{n}\sum_{k=0}^{\intpart{\frac{n}{2}}}(-1)^k
  \Sym\paraa{U^{n-2k}V^{2k}}\\
  &X^2 = x^2\mid + \sum_{k=1}^{\intpart{\frac{n-1}{2}}}(-1)^k
  \Sym\paraa{U^{n-1-2k}V^{2k-1}}\\
  &\qquad\qquad\qquad+\frac{n-2}{n}\sum_{k=1}^{\intpart{\frac{n+1}{2}}}(-1)^k
  \Sym\paraa{U^{n-2k+1}V^{2k-1}}\\
  &X^3 = x^3\mid + 
  \frac{2(n-2)}{n-1}\sum_{k=0}^{\intpart{\frac{n-1}{2}}}(-1)^k
  \Sym\paraa{U^{n-1-2k}V^{2k}}.
\end{align*}
Thus, one may construct a noncommutative minimal surface from
the classical one by completely symmetrizing the polynomials. As an
illustration, let us consider the first two non-trivial minimal surfaces arising
in this way. 

For $\Ft(\Lambda)=\Lambda^3$ (corresponding to $F(\Lambda)=6$) one
obtains the noncommutative Enneper surface
\begin{align*}
  &X^1 = x^1\mid + U-\frac{1}{3}U^3+\frac{1}{3}\Sym\paraa{UV^2}\\
  &X^2 = x^2\mid - V +
  \frac{1}{3}V^3-\frac{1}{3}\Sym\paraa{U^2V}\\
  &X^2 = x^3\mid + U^2 - V^2
\end{align*}
which, using that $[U,V]=i\hbar\mid$, can be written as
\begin{align*}
  &X^1 = x^1\mid + U + UV^2-\frac{1}{3}U^3-i\hbar V\\
  &X^2 = x^2\mid -V -U^2V+\frac{1}{3}V^3+i\hbar U\\
  &X^3 = x^3\mid + U^2 - V^2.
\end{align*}
For $\Ft(\Lambda)=\Lambda^4$ (corresponding to $F(\Lambda)=24\Lambda$)
one obtains
\begin{align*}
  &X^1 = x^1\mid+U^2-V^2-\frac{1}{2}\paraa{U^4+V^4}
  +\frac{1}{2}\Sym\paraa{U^2V^2}\\
  &X^2 = x^2\mid-UV-VU-\frac{1}{2}\Sym(U^3V)+\frac{1}{2}\Sym(UV^3)\\
  &X^3 = x^3\mid + \frac{4}{3}U^3-\frac{4}{3}\Sym(UV^2),
\end{align*}
which may be written as
\begin{align*}
  &X^1 = \parac{x^1-\frac{3}{2}\hbar^2}\mid + U^2-V^2
  -\frac{1}{2}\paraa{U^4+V^4}+3U^2V^2
  -6i\hbar U\\
  &X^2 = (x^2+i\hbar)\mid - 2UV - 2U^3V + 2UV^3
  -3i\hbar V^2+3i\hbar U^2\\
  &X^3 = x^3\mid + \frac{4}{3}U^3-4UV^2+4i\hbar V.
\end{align*}
Algebraic surfaces can also be obtained from Theorem
\ref{thm:WeierstrassRep}, some of which cannot be constructed as in
Theorem \ref{thm:WeierstrassF}. For instance, choosing $f=2$ and
$g=\Lambda^n$ gives the higher order Enneper surfaces as
\begin{align*}
  X^1&=x^1\mid+U-\frac{1}{2n+1}\sum_{k=0}^n(-1)^k\Sym\paraa{U^{2n+1-2k}V^{2k}}\\
  X^2&=x^2\mid-V+\frac{1}{2n+1}\sum_{k=1}^{n+1}(-1)^k\Sym\paraa{U^{2n+2-2k}V^{2k-1}}\\
  X^3&=x^3\mid+\frac{2}{n+1}\sum_{k=0}^{\intpart{\frac{n+1}{2}}}(-1)^k
  \Sym\paraa{U^{n+1-2k}V^{2k}},
\end{align*}
which, for $n=2$, becomes
\begin{align*}
  X^1 &=x^1\mid+U+2U^3V^2-UV^4-\frac{1}{5}U^5
  -6i\hbar U^2V+2i\hbar V^3-3\hbar^2U\\
  X^2 &=x^2\mid-V+2U^2V^3-U^4V-\frac{1}{5}V^5
  -6i\hbar UV^2+2i\hbar U^3-3\hbar^2V\\
  X^3 &=x^3\mid-2UV^2+\frac{2}{3}U^3+2i\hbar V.
\end{align*}

\subsection{Minimal surfaces in $\Fh^4$}

\noindent For two holomorphic functions $f(z)$ and $g(z)$, it is well
known (cp. \cite{e:minmalEuclideanFour}) that one can construct a minimal surface in $\reals^4$ by
setting
\begin{align*}
  \xv = (\Re f(z),\Im f(z),\Re g(z),\Im g(z)).
\end{align*}
This extends to noncommutative minimal surfaces:

\begin{proposition}
  Let $f,g\in\Fh$ be r-holomorphic and set $\Xv=X^ie_i\in\Fh^4$ with
  \begin{align*}
    \paraa{X^1,X^2,X^3,X^4}=\paraa{\Re f, \Im f, \Re g, \Im g}.
  \end{align*}
  Then $\Xv$ is a minimal surface.
\end{proposition}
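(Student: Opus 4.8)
The plan is to verify the two defining conditions of a noncommutative minimal surface for $\Xv$: that each component $X^i$ is harmonic, i.e. $\Delta_0(X^i)=0$, and that $\E=\G$ and $\F=0$. Since $f$ and $g$ are r-holomorphic, so are their real and imaginary parts, and by an earlier proposition r-holomorphic elements have harmonic real and imaginary parts; this immediately settles $\Delta_0(X^i)=0$ for $i=1,2,3,4$. Alternatively, one may observe that $2\d(\Xv)=\Phi$ is r-holomorphic and invoke Proposition~\ref{prop:harmonicEquivMinimal}, which reduces everything to checking $\ip{\Phi,\Phi}=0$.

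The main computation, then, is to show $\ip{\Phi,\Phi}=0$, equivalently $\E=\G$ and $\F=0$. Using $\Re h=\tfrac12(h+h^\ast)$ and $\Im h=\tfrac1{2i}(h-h^\ast)$ for $h$ r-holomorphic, I would compute $\Phi^i=2\d(X^i)$. Since $\d$ annihilates quotients of polynomials in $\Lambda^\ast$ (as used in the proof of Theorem~\ref{thm:WeierstrassF}), one gets $\Phi^1=\d f$, $\Phi^2=-i\,\d f$, $\Phi^3=\d g$, $\Phi^4=-i\,\d g$. Hence $(\Phi^1)^2+(\Phi^2)^2=(\d f)^2-( \d f)^2=0$ and likewise $(\Phi^3)^2+(\Phi^4)^2=0$, so $\sum_{i=1}^4(\Phi^i)^2=0$, which by Proposition~\ref{prop:computePhiPhi} (in the $n=4$ analogue, where $\ip{\Phi,\Phi}=\sum_i(\Phi^i)^2$ since the $\Phi^i$ commute) gives $\ip{\Phi,\Phi}=0$, hence $\E=\G$ and $\F=0$ by Proposition~\ref{prop:PhiSqEFG}.

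I do not expect any genuine obstacle here; the result is essentially immediate once one notes that the pair $(\Re f,\Im f)$ contributes $(\d f)^2+(-i\d f)^2=0$ to $\ip{\Phi,\Phi}$ and similarly for $g$, so the two "complex planes" decouple and the isotropy condition splits. The only minor care needed is the observation that $\d(\Re h)=\tfrac12\d h$ for r-holomorphic $h$ (because $\d$ kills the $\Lambda^\ast$-part of $h^\ast$), which is exactly the fact already invoked earlier in the paper. After assembling these pieces, I would note that the argument goes through verbatim in the Weyl algebra $\Ah$ when $f,g$ are holomorphic rather than merely r-holomorphic, since nothing in the computation used inversion.
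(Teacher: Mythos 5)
Your proof is correct and follows essentially the same route as the paper: compute $\Phi=2\d\Xv=(\d f,-i\d f,\d g,-i\d g)$, observe that the two pairs contribute $(\d f)^2+(-i\d f)^2=0$ and $(\d g)^2+(-i\d g)^2=0$ so that $\ip{\Phi,\Phi}=0$, and conclude via Proposition~\ref{prop:harmonicEquivMinimal}. The paper's proof is just a terser version of the same argument; your extra remarks on $\d(\Re h)=\tfrac12\d h$ and on the holomorphic case in $\Ah$ are consistent with what the paper uses elsewhere.
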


\begin{proof}
  Defining $\Phi=2\d\Xv$ yields
  \begin{align*}
    \paraa{\Phi^1,\Phi^2,\Phi^3,\Phi^4} = 
    \paraa{\d f,-i\d f,\d g,-i\d g},
  \end{align*}
  which implies that $\ip{\Phi,\Phi}=0$. From Proposition
  \ref{prop:harmonicEquivMinimal} it follows that $\Xv$ is a minimal
  surface (since $\Phi^i$ is clearly r-holomorphic).
\end{proof}

\noindent As an example, let us choose $f(\Lambda)=\Lambda^{n}$ and
$g(\Lambda)=\Lambda^m$, which implies that
\begin{align*}
  &X^1 = \Re(\Lambda^n) = \sum_{k=0}^{\intpart{\frac{n}{2}}}
  (-1)^k\Sym\paraa{U^{n-2k}V^{2k}}\\
  &X^2 = \Im(\Lambda^n) = \sum_{k=0}^{\intpart{\frac{n+1}{2}}}
  (-1)^{k+1}\Sym\paraa{U^{n-2k+1}V^{2k-1}}\\
  &X^3 = \Re(\Lambda^m) = \sum_{k=0}^{\intpart{\frac{m}{2}}}
  (-1)^k\Sym\paraa{U^{m-2k}V^{2k}}\\
  &X^4 = \Im(\Lambda^m) = \sum_{k=0}^{\intpart{\frac{m+1}{2}}}
  (-1)^{k+1}\Sym\paraa{U^{m-2k+1}V^{2k-1}},
\end{align*}
and for $f=\Lambda$ and $g=\Lambda^2$ one obtains
\begin{align*}
  (X^1,X^2,X^3,X^4) = (U,V,U^2-V^2,2UV-i\hbar\mid).
\end{align*}

\subsection{Noncommutative catenoids}\label{sec:catenoid}

\noindent The minimal surfaces in the preceding section are algebraic
in the sense that they arise from (finite) polynomials. Several
classical minimal surfaces, such as the catenoid, are constructed in
terms of analytic functions, which are, a priori, not defined in the
algebra. However, as we shall see, one may construct particular
representations in which certain power series are well defined. (A
different approach to the catenoid was taken in \cite{ah:quantizedMinimal}.) 

Let $\V$ be the vector space consisting of infinite sequences of
complex numbers
\begin{align*}
  \V = \{(x_0,x_1,x_2,\ldots):x_i\in\complex\text{ for }i\in\naturals_0\},
\end{align*}
and we denote the canonical basis vectors by $\ket{n}$,
$n\in\naturals_0$. For convenience, we shall write an element
$x=(x_0,x_1,x_2,\ldots)\in \V$ as a formal sum
\begin{align*}
  x = \sum_{k=0}^\infty x_k\ket{k}.
\end{align*}
The space of linear operators $\V\to\V$ is denoted by
$L(\V)$. Moreover, we introduce the subspace $\V_0\subset \V$ of
finite linear combinations
\begin{align*}
  \V_0 = \{x\in \V: |i: x_i\neq 0|<\infty\},
\end{align*}
and denote the set of linear operators with domain $\V_0$ by $L(\V_0,\V)$.
As is well known, the Weyl algebra can be represented on $\V$ by
introducing operators $a,\ad\in L(\V)$, defined by
\begin{align*}
  &a\ket{0} = 0\\
  &a\ket{n} =\sqrt{n}\ket{n-1}\text{ for }n\geq 1\\
  &\ad\ket{n}=\sqrt{n+1}\ket{n+1},
\end{align*}
fulfilling $[a,\ad]\ket{n}=\ket{n}$, and then setting
\begin{align*}
  &U = \sqrt{\frac{\hbar}{2}}\paraa{\ad+a}\\
  &V = i\sqrt{\frac{\hbar}{2}}\paraa{\ad-a},
\end{align*}
from which it follows that $\Lambda=U+iV=\sqrt{2\hbar}a$ and
$\Ld\equiv\Lambda^\ast = U-iV=\sqrt{2\hbar}\ad$.  We note that the
operators $U$ and $V$ leave the subspace $\V_0$ invariant. Let us
recall two useful formulas:
\begin{lemma}
  \begin{align}
    &a^k\ket{n} =
    \begin{cases}
      \sqrt{\frac{n!}{(n-k)!}}\ket{n-k}&\text{ if }k\leq n\\
      0&\text{ if }k>n
    \end{cases}\\
    &(\ad)^k\ket{n} = \sqrt{\frac{(n+k)!}{n!}}\ket{n+k}.
  \end{align}
\end{lemma}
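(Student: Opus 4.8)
The plan is to prove the two formulas for the action of $a^k$ and $(\ad)^k$ on a basis vector $\ket{n}$ by induction on $k$. Both statements are entirely about the explicit linear operators defined just above in the text, so no results from the theory of the Weyl algebra or $\Fh$ are needed — this is a direct computation from the defining relations $a\ket{0}=0$, $a\ket{n}=\sqrt{n}\ket{n-1}$, and $\ad\ket{n}=\sqrt{n+1}\ket{n+1}$.

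For the second formula, the base case $k=0$ is trivial (with the convention $(n+0)!/n!=1$). For the inductive step, assuming $(\ad)^k\ket{n} = \sqrt{(n+k)!/n!}\,\ket{n+k}$, one applies $\ad$ once more and uses $\ad\ket{n+k}=\sqrt{n+k+1}\ket{n+k+1}$ to get
\begin{align*}
  (\ad)^{k+1}\ket{n} = \sqrt{\frac{(n+k)!}{n!}}\sqrt{n+k+1}\,\ket{n+k+1}
  = \sqrt{\frac{(n+k+1)!}{n!}}\,\ket{n+k+1},
\end{align*}
which closes the induction.

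For the first formula there is a case distinction to manage. Again the base case $k=0$ is immediate (with $n!/n!=1$ for all $n\geq 0$). For the inductive step, suppose the claim holds for some $k$; apply $a$ once more to $a^k\ket{n}$. If $k>n$ then $a^k\ket{n}=0$, hence $a^{k+1}\ket{n}=0$, and since $k+1>n$ this matches the claimed value. If $k=n$ then $a^k\ket{n} = \sqrt{n!/0!}\,\ket{0} = \sqrt{n!}\,\ket{0}$, and applying $a$ gives $0$ because $a\ket{0}=0$; since $k+1 = n+1 > n$ this again agrees. If $k<n$ then $a^k\ket{n}=\sqrt{n!/(n-k)!}\,\ket{n-k}$ with $n-k\geq 1$, so $a\ket{n-k}=\sqrt{n-k}\,\ket{n-k-1}$ and
\begin{align*}
  a^{k+1}\ket{n} = \sqrt{\frac{n!}{(n-k)!}}\sqrt{n-k}\,\ket{n-k-1}
  = \sqrt{\frac{n!}{(n-k-1)!}}\,\ket{n-(k+1)},
\end{align*}
which is exactly the claimed expression for $k+1\leq n$.

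There is no real obstacle here; the only point requiring any care is the boundary behavior of the lowering operator, i.e. making sure the three sub-cases $k<n$, $k=n$, $k>n$ are all handled and glue together correctly in the induction — in particular that the transition from $k=n$ to $k=n+1$ lands in the ``$k>n$'' branch with value $0$. Everything else is a routine manipulation of factorials.
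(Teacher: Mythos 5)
Your induction is correct in all cases, including the boundary transitions at $k=n$ and $k>n$. The paper states this lemma without proof (it is ``recalled'' as a standard fact about the ladder operators), and your argument is exactly the routine induction one would supply, so there is nothing to compare beyond noting that you have filled in the omitted verification correctly.
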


\noindent For arbitrary $\lambda\in\complex$ we define linear operators
$e^{\lambda a},e^{\lambda\ad}\in L(\V_0,\V)$ as
\begin{align*}
  &e^{\lambda a}\ket{n} = 
  \sum_{k=0}^\infty \frac{(\lambda a)^k}{k!}\ket{n}
  =\sum_{k=0}^n\frac{\lambda^k}{k!}\sqrt{\frac{n!}{(n-k)!}}\ket{n-k}\\
  &e^{\lambda\ad}\ket{n} =
  \sum_{k=0}^\infty\frac{(\lambda\ad)^k}{k!}\ket{n}
  =\sum_{k=0}^\infty\frac{\lambda^k}{k!}\sqrt{\frac{(n+k)!}{n!}}\ket{n+k}.
\end{align*}
Furthermore, let us introduce $\dhu,\dhv,\d,\db,\Delta_0:L(\V)\to
L(\V)$, defined via commutators, as in Section
\ref{sec:weylAlgebra}. Since $U$ and $V$ leave $\V_0$ invariant, the
aforementioned maps can be considered as maps $L(\V_0,V)\to
L(\V_0,V)$.

The classical catenoid may be parametrized as ($z=u+iv$)
\begin{align*}
  &x^1(u,v) = \Re(\cosh z) = \cosh u\cos v\\
  &x^2(u,v) = \Re(-i\sinh z) = \cosh u\sin v\\
  &x^3(u,v) = \Re(z) = u
\end{align*}
arising from the Weierstrass data $f(z)=-e^{-z}$ and $g(z)=-e^z$
(cp. Theorem \ref{thm:WeierstrassRep})\footnote{Note that there exist
  other possibilities for $f$ and $g$.}. In analogy, we set
\begin{align*}
  &X^1 =
  \frac{1}{4}\parab{e^\Lambda+e^{-\Lambda}+e^{\Ld}+e^{-\Ld}}\\
  &X^2 =
  -\frac{i}{4}\parab{e^{\Lambda}-e^{-\Lambda}-e^{\Ld}+e^{-\Ld}}\\
  &X^3 = U
\end{align*}
which implies that $X^1,X^2,X^3\in L(\V_0,\V)$; we will now show that
$\Delta_0(X^i)=0$ for $i=1,2,3$. 
\begin{lemma}
  For $\lambda\in\complex$, it holds that
  \begin{align}
    &[e^{\lambda a},\ad]\ket{n} = \lambda e^{\lambda a}\ket{n}\\
    &[e^{\lambda\ad},a]\ket{n} = -\lambda e^{\lambda\ad}\ket{n}.
  \end{align}
\end{lemma}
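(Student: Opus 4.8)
The plan is to reduce both identities to the single elementary relation $[a,\ad]\ket{n}=\ket{n}$ (equivalently, $[a,\ad]$ acts as the identity on $\V_0$) and then propagate it through the power series defining $e^{\lambda a}$ and $e^{\lambda\ad}$. First I would establish, by a short induction on $k$, the operator identities
\begin{align*}
  [a^k,\ad]\ket{n} = k\,a^{k-1}\ket{n}\quad\text{and}\quad [(\ad)^k,a]\ket{n} = -k\,(\ad)^{k-1}\ket{n}
\end{align*}
for all $k,n\in\naturals_0$; these are nothing but the relations $\dh_a\,p(U^a)=p'(U^a)$ of Proposition \ref{prop:dudvProperties} transcribed for $a,\ad$. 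The inductive step uses the Leibniz rule for the commutator, e.g. $[a^k,\ad]=a[a^{k-1},\ad]+[a,\ad]a^{k-1}$, together with the fact that $[a,\ad]$ acts as a scalar on $\V_0$ (so that $[a,\ad]a^{k-1}\ket{n}=a^{k-1}\ket{n}$); the second identity follows in the same way from $[\ad,a]=-\mid$ on $\V_0$.

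Next I would substitute these into the exponential series. For the first identity I would exploit that $e^{\lambda a}\ket{n}=\sum_{k=0}^{n}\frac{\lambda^k}{k!}a^k\ket{n}$ is a \emph{finite} sum, so that $\ad\, e^{\lambda a}\ket{n}$ and $e^{\lambda a}\ad\ket{n}$ are finite linear combinations of basis vectors and the commutator may be evaluated term by term:
\begin{align*}
  [e^{\lambda a},\ad]\ket{n} = \sum_{k\ge 1}\frac{\lambda^k}{k!}[a^k,\ad]\ket{n}
  = \sum_{k\ge 1}\frac{\lambda^k}{(k-1)!}a^{k-1}\ket{n} = \lambda\, e^{\lambda a}\ket{n},
\end{align*}
after shifting the summation index. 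For the second identity, $e^{\lambda\ad}\ket{n}=\sum_{k\ge 0}\frac{\lambda^k}{k!}(\ad)^k\ket{n}$ is genuinely infinite, so I would argue componentwise: since $a\in L(\V)$ acts on a formal sum $\sum_m c_m\ket{m}$ by the coefficientwise rule $a\,\sum_m c_m\ket{m}=\sum_m c_{m+1}\sqrt{m+1}\ket{m}$ (each resulting coefficient being a finite expression), applying $a$ commutes with the infinite summation, and the same reindexing gives $[e^{\lambda\ad},a]\ket{n}=-\lambda\, e^{\lambda\ad}\ket{n}$.

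The only real obstacle is precisely this last interchange of $a$ with the infinite series defining $e^{\lambda\ad}$; it is legitimate because $e^{\lambda\ad}\ket{n}$ lies in $\V$ (not merely $\V_0$) while $a$ and $\ad$ are defined on all of $\V$ through a coefficientwise formula, but I would make it explicit by comparing the $\ket{m}$-coefficients of $a\,e^{\lambda\ad}\ket{n}$ and of $\sum_k\frac{\lambda^k}{k!}a(\ad)^k\ket{n}$ directly. Everything else — the two inductions and the index shifts — is routine, and no convergence questions arise since each coefficient of every vector that occurs is a finite sum.
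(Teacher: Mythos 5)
The paper states this lemma without any proof at all, so there is nothing to compare your argument against; it is evidently being treated as a standard fact. Your proof is correct and complete: the induction giving $[a^k,\ad]\ket{n}=k\,a^{k-1}\ket{n}$ and $[(\ad)^k,a]\ket{n}=-k\,(\ad)^{k-1}\ket{n}$ is the expected route, and you correctly identify and resolve the only genuinely delicate point, namely that $e^{\lambda\ad}\ket{n}$ leaves $\V_0$, so the interchange of $a$ with the infinite series must be justified coefficientwise (which works because $a$ sends $\ket{m}$ to a multiple of $\ket{m-1}$, so each coefficient of the image involves a single term). No changes needed.
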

\noindent From the above result, one easily deduces 
\begin{align*}
  &\d e^{\lambda\Lambda}\ket{n} =
  \lambda e^{\lambda\Lambda}\ket{n}\qquad\qquad
  \db e^{\lambda\Lambda}\ket{n} = 0\\
  &\db e^{\lambda\Ld}\ket{n} = 
  \lambda e^{\lambda\Ld}\ket{n}\qquad\quad
  \d e^{\lambda\Ld}\ket{n} = 0
\end{align*}
for arbitrary $\lambda\in\complex$. Since $\Delta_0(X^i)=4\db\d X^i$ one
obtains
\begin{align*}
  &\Delta_0(X^1)\ket{n} = \db\parab{e^\Lambda-e^{-\Lambda}}\ket{n} = 0\\
  &\Delta_0(X^2)\ket{n} = -i\db\parab{e^\Lambda+e^{-\Lambda}}\ket{n} = 0\\
  &\Delta_0(X^3)\ket{n} = 2\db\d\paraa{\Lambda+\Ld}\ket{n} = 
  2\db(\mid)\ket{n} = 0.
\end{align*}
Hence, $\Delta_0{X^i}$, for $i=1,2,3$, are $0$ as operators in $L(\V_0,\V)$.

What about the condition that the parametrization is conformal? That
is
\begin{align*}
  \ip{\dhu\Xv,\dhu\Xv}=\ip{\dhv\Xv,\dhv\Xv}\text{ and }
  \ip{\dhu\Xv,\dhv\Xv} = 0.
\end{align*}
Since $X^1$ and $X^2$ do not preserve $\V_0$, their composition is a
priori not well defined. However, algebraically, the above
is equivalent to $\ip{\Phi,\Phi}=0$ (cp. Proposition \ref{prop:PhiSqEFG}); with
\begin{align*}
  &\Phi^1\ket{n} = 2\d X^1\ket{n}
  = \frac{1}{2}\parab{e^{\Lambda}-e^{-\Lambda}}\ket{n}\\
  &\Phi^2\ket{n} = 2\d X^2\ket{n}
  = -\frac{i}{2}\parab{e^{\Lambda}+e^{-\Lambda}}\ket{n}\\
  &\Phi^3\ket{n} = U\ket{n}
\end{align*}
the expression $\ip{\Phi,\Phi}$ is well defined, since
$e^{\pm\Lambda}$ maps $\V_0$ into $\V_0$, and one readily checks that
$\ip{\Phi,\Phi}=0$.

\bibliographystyle{alpha}
\bibliography{nms}  

\end{document}